\newcommand\ignore[1]{}
\numberwithin{equation}{section}
\numberwithin{equation}{subsection} 
\newtheorem{theorem}{Theorem}[section]
\newtheorem{lemma}[theorem]{Lemma}
\newtheorem{corollary}{Corollary}[theorem] 
\newtheorem{remark}[theorem]{Remark}
\begin{document} 
\baselineskip=15.5pt

\title
[Borel-de Siebenthal positive root systems]{Borel-de Siebenthal Positive Root Systems}  
\author{Pampa Paul }
\address{Department of Mathematics, Presidency University, 86/1 College Street, Kolkata 700073, India}
\email{pampa.maths@presiuniv.ac.in }
\subjclass[2020]{17B10, 17B20, 17B22, 17B25, 22E46.  \\ 
Keywords and phrases: Equi-rank Lie algebra, root system, positive root system, Dynkin diagram, discrete series representation.}

\thispagestyle{empty}
\date{}

\begin{abstract}
Let $G$ be a connected simple Lie group with finite centre, $K$ be a maximal compact subgroup of $G,$ and rank$(G)=$ rank$(K).$ 
Let $\frak{g}_0=$Lie$(G), \frak{k}_0=$Lie$(K) \subset \frak{g}_0, \frak{t}_0$ be a maximal abelian subalgebra of $\frak{k}_0, 
\frak{g}=\frak{g}_0^\mathbb{C}, \frak{k}=\frak{k}_0^\mathbb{C},$ and $\frak{h}=\frak{t}_0^\mathbb{C}.$ The existence of a 
Borel-de Siebenthal positive root system of $\Delta(\frak{g}, \frak{h})$ is proved by Borel and de Siebenthal. 
In this article, we have determined all Borel-de Siebenthal positive root systems of $\Delta(\frak{g}, \frak{h}),$ assuming the existence. 
As an application, we have determined 
the number of unitary equivalence classes of all Borel-de Siebenthal discrete series representations of $G$ (if $G/K$ is not Hermitian symmetric) 
with a fixed infinitesimal character.
\end{abstract}
\maketitle

\noindent 
\section{Introduction} 

Let $G$ be a connected simple Lie group with finite centre, $K$ be a maximal compact subgroup of $G,$ and rank$(G)=$ rank$(K).$ 
Let $\frak{g}_0$ be the Lie algebra of $G, \frak{k}_0$ be the subalgebra of $\frak{g}_0$ associated with the Lie subgroup $K$ of $G,$ and 
$\frak{t}_0$ be a maximal abelian subalgebra of $\frak{k}_0.$ Since rank$(G)=$ rank$(K), \frak{h}= \frak{t}_0^\mathbb{C}$ is a Cartan subalgebra of  
$\frak{g}=\frak{g}_0^\mathbb{C}$ as well as of $\frak{k}=\frak{k}_0^\mathbb{C}.$ Borel and de Siebenthal \cite{bds} have proved the existence of  
a positive root system of $\Delta = \Delta(\frak{g}, \frak{h})$ such that the associated set of simple roots contains exactly one non-compact simple 
root $\nu,$ and the coefficient $n_\nu(\delta)$ of $\nu$ in the highest root $\delta,$ when expressed as the sum of simple roots is $1,$ if $G/K$ is 
Hermitian symmetric, and $n_\nu (\delta)=2,$ if $G/K$ is not Hermitian symmetric. Borel-de Siebenthal positive root system has many applications 
in the representation theory of equi-rank Lie groups. For example, if $G/K$ is Hermitian symmetric, then a special positive root system which is used 
in \cite{hc1} to define holomorphic discrete series representations, is a Borel-de Siebenthal positive root system. If $G/K$ is not Hermitian symmetric, 
\O rsted and Wolf \cite{ow} have defined Borel-de Siebenthal discrete series representations of $G$ analogous to holomorphic discrete series representations, 
using a Borel-de Siebenthal positive root system. Also Borel-de Siebenthal positive root systems are used intrinsically to classify equi-rank non-compact 
real forms of a complex simple Lie algebra. See \cite[Ch. X]{helgason}, \cite[Ch. VI]{knappb}.  

Let $W_\frak{g}$ (respectively, $W_\frak{k}$) be the Weyl group of $\frak{g}$ (respectively, $\frak{k}$) with respect to the Cartan subalgebra $\frak{h}.$ 
If $P$ is a Borel-de Siebenthal positive root system of $\Delta,$ then so is $w(P)$ for all $w \in W_\frak{k}.$ Thus to determine the Borel-de 
Siebenthal positive root systems of $\Delta,$ it is sufficient to determine the Borel-de Siebenthal positive root systems containing a fixed positive 
root system $P_\frak{k}$ of $\Delta_\frak{k} = \Delta(\frak{k}, \frak{h}).$ Now to describe the main results of this article, we will introduce some more notations. 

Let $\frak{g}_0 = \frak{k}_0 \oplus \frak{p}_0$ be the Cartan decomposition associated to the maximal compact subgroup $K$ of $G,$ and 
$\frak{p}=\frak{p}_0^\mathbb{C}.$ Let $P$ be a Borel-de Siebenthal positive root system of $\Delta$ containing the fixed positive root system $P_\frak{k}$ of 
$\Delta_\frak{k}, \Phi$ be the set of all simple roots in $P,$ and $\nu \in \Phi$ be the unique non-compact root. 
Then we have a gradation of $\frak{g}$ with $\frak{p} = \frak{l}_{-1} \oplus \frak{l}_1,$ 
\[\frak{k} = 
\begin{cases}
\frak{l}_0  & \textrm{if } \frak{k} \textrm{ has non-zero centre}, \\
\frak{l}_{-2} \oplus \frak{l}_0 \oplus \frak{l}_2 & \textrm{if } \frak{k} \textrm{ is semisimple}; \\
\end{cases}
\] 
and $[\frak{l}_0, \frak{l}_i] \subset \frak{l}_i$ for all $i.$ 
The subalgebra $\frak{l}_0$ is a reductive, and $\frak{h}$ is a Cartan subalgebra of $\frak{l}_0.$ Let $\Delta_0 = \Delta(\frak{l}_0, \frak{h}).$ 
Then $P_0 = \Delta_0 \cap P_\frak{k}$ is a positive root system of $\Delta_0,$ and $\Phi_0 = \Phi \setminus \{\nu\}$ is the set of all simple roots in $P_0.$ Let 
$W_{\frak{l}_0}$ be the Weyl group of $\frak{l}_0$ relative to the Cartan subalgebra $\frak{h},$ and $w_{\frak{l}_0}^0 \in W_{\frak{l}_0}$ be the longest element 
with respect to the positive root system $P_0.$ Also the adjoint representation of $\frak{l}_0$ on $\frak{l}_i$ is irreducible for all $i \neq 0,$ and if $K$ is 
semisimple, the adjoint representation of $\frak{k}$ on $\frak{p}$ is irreducible. Note that the highest root $\delta$ of $\frak{g}$ is the highest weight of the 
irreducible $\frak{l}_0$-module $\frak{l}_2,$ and $\nu$ is the lowest weight of the irreducible $\frak{l}_0$-module $\frak{l}_1,$ with respect to the positive root system 
$P_0$ of $\Delta_0.$ Let $\lambda = w_{\frak{l}_0}^0(\nu),$ and $\epsilon =w_{\frak{l}_0}^0(\delta).$ Then $\lambda$ is the highest weight of the $\frak{l}_0$-module $\frak{l}_1$ 
relative to the positive root system $P_0$ of $\Delta_0,$ as well as of the $\frak{k}$-module $\frak{p}$ relative to the positive root system $P_\frak{k}$ of $\Delta_\frak{k}.$ 
Also $\Phi_\frak{k} = \Phi_0 \cup \{\epsilon\}$ is the set of all simple roots in $P_\frak{k}.$ If $\alpha \in \Delta,$ 
let $n_\phi (\alpha)$ denote the coefficient of $\phi$ in $\alpha$ when expressed as the sum of elements in $\Phi$ for all $\phi \in \Phi.$ 
Now we are ready to state the main results of this article. 

%
%
%
\begin{theorem}\label{main} 
Assume that $\frak{k}$ is semisimple. 

Let $\phi' \in \Phi_0$ be such that $n_{\phi'}(\delta)=1,$ and $\phi \in \Phi_0$ be such that $w_{\frak{l}_0}^0 (\phi') = -\phi.$ 
Let $\frak{l}'_0$ be the reductive subalgebra of $\frak{k}$ containing $\frak{h}$ and the Dynkin diagram of $[\frak{l}'_0, \frak{l}'_0]$ be 
the subdiagram of the Dynkin diagram of $\frak{k}$ with vertices $\Phi_\frak{k} \setminus \{\phi\}.$ Let $\nu'$ be the lowest weight of the irreducible $\frak{l}'_0$-submodule 
of $\frak{p}$ with highest weight $\lambda,$ relative to the positive roots system whose simple roots are $\Phi_\frak{k} \setminus \{\phi\}.$ 
Then $(\Phi_\frak{k} \setminus \{\phi \}) \cup \{\nu'\}$ is the set of all simple roots of a Borel-de Siebenthal positive root system of 
$\Delta$ containing $P_\frak{k}.$ 

Conversely, any Borel-de Siebenthal positive root system of $\Delta$ containing $P_\frak{k}$ is either $P$ or has the above form. 
\end{theorem}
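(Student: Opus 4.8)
The plan is to translate each positive system containing $P_\frak{k}$ into a $\mathbb{Z}$-grading of $\frak{g}$ and to recognize the Borel--de Siebenthal property as a numerical bound on that grading. For a weight $\mu$ write $a_\psi(\mu)$ for its coefficient on $\psi$ when expanded in the basis $\Phi_\frak{k}$ of $\frak{h}^*$, and write $\mu \preceq \mu'$ when $\mu'-\mu$ is a non-negative integral combination of $\Phi_\frak{k}$. The computation I would do first is to apply $w_{\frak{l}_0}^0$ to $\delta = 2\nu + \sum_{\beta\in\Phi_0} n_\beta(\delta)\beta$; using $w_{\frak{l}_0}^0(\nu)=\lambda$, $w_{\frak{l}_0}^0(\delta)=\epsilon$ and $w_{\frak{l}_0}^0(\beta)=-\widehat{\beta}$ with $\widehat{\beta}:=-w_{\frak{l}_0}^0(\beta)\in\Phi_0$, this gives $2\lambda = \epsilon + \sum_{\beta\in\Phi_0} n_\beta(\delta)\widehat{\beta}$. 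Since $w_{\frak{l}_0}^0$ is an involution one has $\widehat{\phi'}=\phi$, so comparing the coefficient of $\phi$ on both sides yields the dictionary $a_\phi(\lambda)=\tfrac12 n_{\phi'}(\delta)$, valid for every $\phi\in\Phi_0$. Thus the hypothesis $n_{\phi'}(\delta)=1$ is exactly the assertion $a_\phi(\lambda)=\tfrac12$, and this is the quantity I will track.

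For the first assertion I would put $H'\in\frak{h}$ with $\gamma(H')=0$ for $\gamma\in\Phi_\frak{k}\setminus\{\phi\}$ and $\phi(H')=2$, so that $\mu(H')=2a_\phi(\mu)$, and let $\frak{l}'_j$ be the $\operatorname{ad}H'$-eigenspace for $j$; its zero part is the subalgebra $\frak{l}'_0$ of the statement, with simple roots $\Phi_\frak{k}\setminus\{\phi\}$. The goal is to show this is a Borel--de Siebenthal gradation. Parity is immediate: every weight of $\frak{p}$ lies in $\lambda+\mathbb{Z}\Phi_\frak{k}$, hence has $a_\phi\equiv\tfrac12\pmod 1$ and odd degree, while compact roots lie in $\mathbb{Z}\Phi_\frak{k}$ and have even degree. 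The two sharp bounds are the crux. Because $\lambda$ is the highest weight of the $\frak{k}$-module $\frak{p}$, every non-compact root $\mu$ satisfies $\mu\preceq\lambda$, so $a_\phi(\mu)\le a_\phi(\lambda)=\tfrac12$ and the non-compact degrees are at most $1$; and since $\frak{g}$ is simple and $\frak{k}$ semisimple, $\frak{k}=[\frak{p},\frak{p}]$, so every compact root is a weight of $\frak{p}\otimes\frak{p}$, hence $\preceq 2\lambda$, giving $a_\phi\le 1$ and compact degrees at most $2$. Hence the degrees lie in $\{0,\pm1,\pm2\}$ with $\frak{p}=\frak{l}'_{-1}\oplus\frak{l}'_1$ and $\phi\in\frak{l}'_2$, so $\frak{l}'_2\ne 0$. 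The degree-$1$ space $\frak{l}'_1$ is the set of weights of $\frak{p}$ with $a_\phi=\tfrac12$, i.e. those reached from $\lambda$ by $\frak{l}'_0$-lowering, so it is the irreducible $\frak{l}'_0$-module of highest weight $\lambda$ and lowest weight $\nu'=w_{\frak{l}'_0}^0(\lambda)$. Letting $P'$ be the positive system refining this grading by $P_\frak{k}$ on $\frak{l}'_0$, the standard description of the simple roots of a graded positive system gives base $(\Phi_\frak{k}\setminus\{\phi\})\cup\{\nu'\}$ with $\nu'$ the unique non-compact simple root; $P'\supseteq P_\frak{k}$ holds because positive compact roots have $a_\phi\ge 0$; and the highest root $\delta'$ relative to $P'$ lies in $\frak{l}'_2$, so $n_{\nu'}(\delta')=2$, i.e. $P'$ is Borel--de Siebenthal.

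For the converse the shape is forced. If $P'\supseteq P_\frak{k}$ is Borel--de Siebenthal then $P'\cap\Delta_\frak{k}=P_\frak{k}$, so the compact simple roots of $P'$ lie in $\Phi_\frak{k}$; having exactly one non-compact simple root $\nu'$ then makes the base $(\Phi_\frak{k}\setminus\{\phi\})\cup\{\nu'\}$ for a unique $\phi\in\Phi_\frak{k}$, and the argument above identifies $\nu'$ as the lowest weight of the top $\frak{l}'_0$-submodule of $\frak{p}$. If $\phi=\epsilon$ the base is $\Phi_0\cup\{\nu\}$, so $P'=P$. If $\phi\in\Phi_0$, let $H'$ be the grading element of $P'$, normalized so that $\nu'$ has degree $1$; it vanishes on $\Phi_\frak{k}\setminus\{\phi\}$, and since $\phi$ is a positive compact root its degree $\phi(H')$ is a positive even integer, which the Borel--de Siebenthal bound forces to be $2$. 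Then $\mu(H')=2a_\phi(\mu)$, and as $\lambda$ has degree $1$ this gives $a_\phi(\lambda)=\tfrac12$; the dictionary now returns $n_{\phi'}(\delta)=1$ with $\phi'=-w_{\frak{l}_0}^0(\phi)$, and $\nu'$ is as described, so $P'$ has the asserted form. I expect the compact degree bound to be the only genuinely non-formal step: everything else is bookkeeping on the grading, while that bound rests on the dominance estimate $\Delta_\frak{k}\preceq 2\lambda$ furnished by $\frak{k}=[\frak{p},\frak{p}]$.
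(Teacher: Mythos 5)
Your argument is correct, and it takes a genuinely different route from the paper's. The paper first pins down the possible bases combinatorially (via $\Phi'_0\cup\{\epsilon'\}=\Phi_0\cup\{\epsilon\}$), extracts $n_{\phi'}(\delta)=1$ by a computation with the longest element $w^0_{\frak{l}'_0}$, and then, for the forward direction, verifies positivity of all roots in the proposed base by combining Lemma \ref{l2} with two facts checked case by case in \S\ref{case} (that $\langle\lambda,\psi\rangle\neq 0$ for some $\psi\in C\setminus\{\phi\}$ when $\phi\in C$, and that $\phi$ has non-negative coefficients in the new base). You replace all of this with a uniform grading argument: the coweight $H'$ dual to $\phi$ in the basis $\Phi_\frak{k}$ gives a $\mathbb{Z}$-grading in which the two sharp bounds — non-compact degree at most $1$, compact degree at most $2$ — follow respectively from the irreducibility of $\frak{p}$ over $\frak{k}$ and from $\frak{k}=[\frak{p},\frak{p}]$, and the dictionary $a_\phi(\lambda)=\tfrac12 n_{\phi'}(\delta)$ (which is exactly the identity $2\lambda=\epsilon+\sum_{\psi}n_{\psi'}(\delta)\psi$ at the heart of the paper's Lemma \ref{l1}) makes the hypothesis $n_{\phi'}(\delta)=1$ equivalent to the grading being of Borel--de Siebenthal type. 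This buys you a case-free proof and a transparent converse; what it loses is the explicit expression of $\phi$ and $\nu'$ in each type, which the paper's \S\ref{case} records and then reuses for Corollary \ref{cor}. Two steps deserve to be written out rather than asserted: (i) that the degree-one piece of $\frak{p}$ is the irreducible $\frak{l}'_0$-submodule generated by $\lambda$ — this is true because, by PBW, every weight vector of $\frak{p}$ whose weight differs from $\lambda$ by an element of $\mathbb{Z}_{\ge 0}(\Phi_\frak{k}\setminus\{\phi\})$ lies in $U(\frak{l}'_0)v_\lambda$, which is a completely reducible highest weight module and hence irreducible; and (ii) in the converse, that $\lambda$ indeed has degree $+1$ (equivalently $\lambda\in P'$), which follows since $\lambda-\beta\in\mathbb{Z}_{\ge 0}\Phi_\frak{k}$ for any $\beta\in P'\cap\Delta_n$ and $\Phi_\frak{k}\subset P'$. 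With those lines added, the proof is complete.
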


\begin{corollary}\label{cor}
Let $\frak{g}$ be a complex simple Lie algebra, $\frak{g}_0$ be a non-compact real form of $\frak{g}$ with $\frak{k}_0,$ a maximal compactly imbedded subalgebra of $\frak{g}_0,$ 
rank$(\frak{g}_0)=$rank$(\frak{k}_0),$ and $\frak{k}_0$ be semisimple. Let $\frak{k}=\frak{k}_0^\mathbb{C}, \frak{h}$ be a Cartan subalgebra of $\frak{g},$ 
with $\frak{h}=\frak{t}_0^\mathbb{C}, \frak{t}_0 \subset \frak{k}_0,$ and $P_\frak{k}$ be a fixed positive root system of $\Delta_\frak{k}=\Delta(\frak{k}, \frak{h}).$ Then the 
number of Borel-de Siebenthal positive root systems of $\Delta = \Delta(\frak{g}, \frak{h})$ containing $P_\frak{k}$  is the covering index of the Lie group $Int(\frak{g}),$ the connected 
component of $Aut(\frak{g}).$ 
\end{corollary}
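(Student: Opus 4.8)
The plan is to read off the count from Theorem \ref{main} and match it against a standard description of the covering index. First I would record what the covering index of $Int(\frak{g})$ is: the universal covering of the adjoint group $Int(\frak{g})$ is the connected, simply connected group $G_{sc}$ with Lie algebra $\frak{g},$ and the covering $G_{sc}\to Int(\frak{g})$ has kernel the centre $Z(G_{sc}),$ so that the covering index equals $|Z(G_{sc})|=[P:Q]=:f,$ the index of connection, where $P$ and $Q$ are the weight and root lattices of $\Delta.$ Thus the corollary reduces to showing that the number of Borel-de Siebenthal positive root systems of $\Delta$ containing $P_\frak{k}$ equals $f.$

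Next I would invoke Theorem \ref{main} to list these systems: each is either the fixed system $P,$ or one of the systems with simple roots $(\Phi_\frak{k}\setminus\{\phi\})\cup\{\nu'\},$ indexed by those $\phi'\in\Phi_0$ with $n_{\phi'}(\delta)=1,$ where $\phi$ is determined by $w_{\frak{l}_0}^0(\phi')=-\phi$ and $\nu'$ by $\phi.$ I would then check that this list has no repetitions. Any Borel-de Siebenthal system containing $P_\frak{k}$ has, as its compact simple roots, the set obtained from $\Phi_\frak{k}$ by deleting one node; this deleted node is an invariant of the system, and it is $\epsilon$ for $P$ and $\phi$ for the system attached to $\phi'.$ Since $-w_{\frak{l}_0}^0$ permutes $\Phi_0$ (it is the diagram automorphism of $\frak{l}_0$ induced by the longest Weyl element), each such $\phi$ lies in $\Phi_0,$ distinct $\phi'$ yield distinct $\phi,$ and every such $\phi$ differs from $\epsilon\in\Phi_\frak{k}\setminus\Phi_0.$ Hence all listed systems are pairwise distinct, and the total count is $1+|\{\phi'\in\Phi_0:n_{\phi'}(\delta)=1\}|.$

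Finally I would evaluate the cardinality $|\{\phi'\in\Phi_0:n_{\phi'}(\delta)=1\}|$ by passing to the extended (affine) Dynkin diagram of $\frak{g}.$ Because $\frak{k}$ is semisimple, $G/K$ is not Hermitian symmetric, so the unique non-compact simple root $\nu$ satisfies $n_\nu(\delta)=2;$ consequently every simple root of $\frak{g}$ of mark one lies in $\Phi_0,$ and $|\{\phi'\in\Phi_0:n_{\phi'}(\delta)=1\}|=|\{\alpha\in\Phi:n_\alpha(\delta)=1\}|.$ Adjoining the affine node $\alpha_0=-\delta,$ which also has mark one, the mark-one nodes of the affine diagram are precisely the special vertices on which $P/Q$ acts simply transitively by affine diagram automorphisms; it is a standard fact (verifiable case-by-case through the classification) that their number is $f=[P:Q].$ Deleting the single affine node $\alpha_0$ gives $|\{\alpha\in\Phi:n_\alpha(\delta)=1\}|=f-1,$ whence the total count is $1+(f-1)=f,$ the covering index of $Int(\frak{g}).$ The main obstacle is the combinatorial identity ``number of mark-one affine nodes $=[P:Q]$'' together with the distinctness check of the second paragraph; the former rests on the simply transitive action of $P/Q$ on the special vertices of the affine Dynkin diagram, while the latter rests on the observation that a Borel-de Siebenthal system containing $P_\frak{k}$ is recovered from the single compact simple root it omits from $\Phi_\frak{k}.$
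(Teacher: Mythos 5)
Your proof is correct, and its first half coincides with the paper's: both reduce the count, via Theorem \ref{main}, to $1+B$ where $B=|\{\phi'\in\Phi_0:n_{\phi'}(\delta)=1\}|$. Your explicit verification that the listed systems are pairwise distinct --- because $-w_{\frak{l}_0}^0$ permutes $\Phi_0$, and the compact simple root omitted from $\Phi_\frak{k}$ is an invariant of the system ($\epsilon$ for $P$, $\phi$ for the others) --- is left implicit in the paper, so that step is a small gain in completeness. The two arguments part ways at the identification of $1+B$ with the covering index. The paper tabulates $1+B$ case by case for $\frak{b}_l,\frak{c}_l,\frak{\delta}_l,\frak{e}_6,\frak{e}_7,\frak{e}_8,\frak{f}_4,\frak{g}_2$ (using \S\ref{case}) and matches the result against Helgason's table of covering indices of $Int(\frak{u})$. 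You instead argue uniformly: since $\frak{k}$ is semisimple, $n_\nu(\delta)=2$, so every mark-one simple root lies in $\Phi_0$; adjoining the affine node $-\delta$, which also has mark one, the mark-one vertices of the extended Dynkin diagram are the special vertices, whose number is $[P:Q]=|Z(G_{sc})|$, the covering index; hence $1+B=[P:Q]$. (Pedantically, the group acting simply transitively on the special vertices is $P^\vee/Q^\vee$ rather than $P/Q$, but these have the same order, so the count is unaffected.) Your route buys a classification-free explanation of \emph{why} the answer is the covering index, at the price of importing a standard but nontrivial fact about affine Dynkin diagrams; the paper's route is self-contained given its case-by-case section and the cited table. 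Both are valid.
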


\begin{theorem}\label{rep}
Assume that $G/K$ is not a Hermitian symmetric space. Then the number unitary equivalence classes of Borel-de Siebenthal discrete series representations of $G$ with a fixed infinitesimal character \\
$=
\begin{cases}
1  & \textrm{if } \frak{g} = \frak{e}_8, \frak{f}_4, \frak{g}_2, \\
2  & \textrm{if } \frak{g} = \frak{b}_l(l \ge 2), \frak{c}_l (l \ge 2), \frak{e}_7, \\
3  & \textrm{if } \frak{g} = \frak{e}_6, \\
4  & \textrm{if } \frak{g} = \frak{\delta}_l (l \ge 4). \\
\end{cases}
$
\end{theorem}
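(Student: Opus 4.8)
The plan is to convert the representation-theoretic count into the root-system count furnished by Corollary \ref{cor}, and then to evaluate the resulting lattice index type by type. Since $\mathrm{rank}(G)=\mathrm{rank}(K)$, Harish-Chandra's theory parametrizes the discrete series of $G$ by their regular Harish-Chandra parameters $\lambda \in i\frak{t}_0^*$ taken modulo $W_\frak{k}$, two of them being unitarily equivalent exactly when the parameters are $W_\frak{k}$-conjugate; moreover the infinitesimal character of the discrete series attached to $\lambda$ depends only on the $W_\frak{g}$-orbit of $\lambda$. Fixing a (necessarily regular) infinitesimal character therefore amounts to fixing a single regular orbit $\mathcal{O}=W_\frak{g}\cdot\lambda_0 \subset \frak{h}^*$, and the unitary equivalence classes carrying that infinitesimal character are in bijection with the $W_\frak{k}$-orbits contained in $\mathcal{O}$, of which there are $|W_\frak{g}|/|W_\frak{k}|$ in total.

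Next I would use that a regular $\lambda$ determines the positive root system $P_\lambda=\{\alpha\in\Delta : \langle\lambda,\alpha\rangle>0\}$, and that the construction of \O rsted and Wolf \cite{ow} declares the discrete series attached to $\lambda$ to be Borel-de Siebenthal exactly when $P_\lambda$ is a Borel-de Siebenthal positive root system. As $W_\frak{k}$ acts simply transitively on the positive systems of $\Delta_\frak{k}$ and $\lambda$ is regular, each $W_\frak{k}$-orbit in $\mathcal{O}$ has a unique representative $\lambda$ dominant for $P_\frak{k}$, and then $P_\lambda\supseteq P_\frak{k}$; conversely every positive system of $\Delta$ containing $P_\frak{k}$ is $P_\lambda$ for exactly one such orbit. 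This yields a bijection between the unitary equivalence classes of Borel-de Siebenthal discrete series with the fixed infinitesimal character and the Borel-de Siebenthal positive root systems of $\Delta$ containing $P_\frak{k}$. By Corollary \ref{cor}, the number of the latter equals the covering index of $Int(\frak{g})$.

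Finally I would identify this covering index with the order $|\pi_1(Int(\frak{g}))|$, i.e. with the index $[P:Q]$ of the root lattice $Q$ in the weight lattice $P$ (equivalently the determinant of the Cartan matrix), and read off its value: it is $2$ for $\frak{b}_l$ and $\frak{c}_l$, $4$ for $\frak{d}_l$, $3$ for $\frak{e}_6$, $2$ for $\frak{e}_7$, and $1$ for $\frak{e}_8,\frak{f}_4,\frak{g}_2$, which is precisely the asserted table. It remains to explain why type $\frak{a}_l$ does not occur: under the standing hypothesis that $G/K$ is not Hermitian symmetric the algebra $\frak{k}$ is semisimple, whereas every equi-rank non-compact real form of $\frak{sl}(l+1,\mathbb{C})$ is some $\frak{su}(p,q)$, whose maximal compact subalgebra has non-trivial centre and which is therefore Hermitian; thus type $A$ contributes no case here, and the low-rank conditions $l\ge 2$ for $\frak{b}_l,\frak{c}_l$ and $l\ge 4$ for $\frak{d}_l$ only discard the exceptional isomorphisms ($\frak{b}_1=\frak{c}_1=\frak{a}_1$, $\frak{d}_2=\frak{a}_1\oplus\frak{a}_1$, $\frak{d}_3=\frak{a}_3$). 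The step I expect to require the most care is the bijection of the second paragraph: one must verify that the \O rsted-Wolf positivity condition on $\lambda$ is literally the assertion that $P_\lambda$ is Borel-de Siebenthal, and that distinct Borel-de Siebenthal positive systems containing $P_\frak{k}$ give genuinely inequivalent representations; granting this, Corollary \ref{cor} and the tabulation of $[P:Q]$ complete the proof.
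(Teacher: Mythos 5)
Your proposal is correct and follows essentially the same route as the paper: both arguments reduce the count to the number of Borel-de Siebenthal positive root systems of $\Delta$ containing $P_\frak{k}$ via the Harish-Chandra parametrization of discrete series by regular parameters modulo $W_\frak{k}$, and then read off the answer from Theorem \ref{main}/Corollary \ref{cor} (equivalently the covering index $[P:Q]$). The one verification you defer --- that the \O rsted--Wolf condition on $\gamma$ says exactly that the Harish-Chandra root order $P_0\cup -(P\setminus P_0)$ is a Borel-de Siebenthal positive system --- is precisely the computation the paper carries out (its simple roots are $\Phi_0\cup\{-\lambda\}$ and the coefficient of $-\lambda$ in the highest root $-\epsilon$ is $2$), after which conjugation by $w_{\frak{l}_0}^0 w_\frak{k}^0\in W_\frak{k}$ moves it to a system containing $P_\frak{k}$.
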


Theorem \ref{main} is proved assuming the existence of a Borel-de Siebenthal positive root system of $\Delta.$ Here we have determined all Borel-de Siebenthal 
positive root system of $\Delta$ using the positive root system $P,$ by Theorem \ref{main}. 
These are precisely all Borel-de Siebenthal positive root systems described in Theorem \ref{main} and their $W_\frak{k}$-conjugate. Theorem \ref{rep} is the 
direct application of Theorem \ref{main}.

\noindent
\section{Borel-de Siebenthal positive root system}\label{bds}

Let $\frak{g}_0$ be a real simple Lie algebra, $\frak{g}_0=\frak{k}_0 \oplus \frak{p}_0$ be a Cartan decomposition with $\theta,$ the corresponding Cartan involution, and 
rank$(\frak{g}_0)=$rank$(\frak{k}_0).$ Let $\frak{t}_0$ be a maximal abelian subalgebra of $\frak{k}_0.$ Then $\frak{t}_0$ is a fundamental 
Cartan subalgebra of $\frak{g}_0.$ Let $\frak{g}=\frak{g}_0^\mathbb{C}, \frak{k}=\frak{k}_0^\mathbb{C}, \frak{p}=\frak{p}_0^\mathbb{C},$ and 
$\frak{h}=\frak{t}_0^\mathbb{C}.$ Then $\frak{h}$ is a Cartan subalgebra of $\frak{k}$ as well as of $\frak{g}.$ Let $\Delta = \Delta(\frak{g}, \frak{h})$ be the set of all non-zero 
roots of $\frak{g}$ relative to the Cartan subalgebra $\frak{h},$ and $\frak{g}^\alpha$ be the root space corresponding to a root $\alpha \in \Delta.$ Since $\frak{h} \subset \frak{k}, 
\frak{g}^\alpha$ is one-dimensional, and $[\frak{k}, \frak{k}] \subset \frak{k}, [\frak{k}, \frak{p}] \subset \frak{p},$ we have either $\frak{g}^\alpha \subset \frak{k},$ or 
$\frak{g}^\alpha \subset \frak{p}.$ A root $\alpha \in \Delta$ is said to be a compact root if $\frak{g}^\alpha \subset \frak{k},$ and 
$\alpha \in \Delta$ is said to be a non-compact root if $\frak{g}^\alpha \subset \frak{p}.$ Let $\Delta_\frak{k}$ denote the set of all compact roots in $\Delta,$ and 
$\Delta_n$ denote the set of all non-compact roots in $\Delta.$ Clearly $\frak{k}=\frak{h} +\sum_{\alpha \in \Delta_\frak{k}} \frak{g}^\alpha,
\frak{p}=\sum_{\beta \in \Delta_n} \frak{g}^\beta, \Delta_\frak{k}=\Delta(\frak{k}, \frak{h}),$ and $\Delta_n =\Delta \setminus \Delta_\frak{k}.$ 
We have the following theorem due to Borel-de Siebenthal. 

\begin{theorem}\cite{bds} \label{base}
There is a positive root system $P$ of $\Delta$ such that the corresponding set of simple roots $\Phi$ contains exactly one non-compact root, say $\nu,$ and $n_\nu(\delta),$ 
the coefficient of $\nu$ in the highest root $\delta$ when expressed as the sum of elements in $\Phi,$ is $1,$ if $\frak{k}$ has non-zero centre; and $n_\nu(\delta)$ is $2,$ if $\frak{k}$ 
is semisimple. 
\end{theorem}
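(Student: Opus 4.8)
The plan is to reduce the statement to a combinatorial fact about the extended (affine) Dynkin diagram of $\frak{g}$, by encoding the splitting $\frak{g}=\frak{k}\oplus\frak{p}$ in a single element of $i\frak{t}_0$. First I would record the parity function $\epsilon\colon\Delta\to\mathbb{Z}/2\mathbb{Z}$ given by $\epsilon(\alpha)=0$ if $\alpha$ is compact and $\epsilon(\alpha)=1$ if $\alpha$ is non-compact. Since $[\frak{g}^\alpha,\frak{g}^\beta]\subset\frak{g}^{\alpha+\beta}$ together with $[\frak{k},\frak{k}]\subset\frak{k}$, $[\frak{k},\frak{p}]\subset\frak{p}$, $[\frak{p},\frak{p}]\subset\frak{k}$, the value of $\epsilon$ is additive whenever $\alpha,\beta,\alpha+\beta\in\Delta$; hence $\epsilon$ extends to a homomorphism from the root lattice $Q$ to $\mathbb{Z}/2\mathbb{Z}$, which is nontrivial because $\frak{p}\neq 0$ (as $\frak{g}_0$ is non-compact).

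Next I would lift $\epsilon$ to an integral functional. Choosing values $\epsilon(\alpha_i)\in\{0,1\}$ on the simple roots of an auxiliary base and extending linearly yields, via the Killing form, an element $H_0\in i\frak{t}_0$ with $\alpha(H_0)\in\mathbb{Z}$ and $\alpha(H_0)\equiv\epsilon(\alpha)\pmod 2$ for every $\alpha\in\Delta$; thus a root is compact precisely when $\alpha(H_0)$ is even. The datum $\epsilon$ pins down $H_0$ only modulo $2P^\vee$, where $P^\vee$ is the coweight lattice, while replacing $H_0$ by a $W_\frak{g}$-translate amounts to changing the base of $\Delta$. Consequently the admissible modifications of $H_0$ are generated by $W_\frak{g}$ and translations by $2P^\vee$; a fundamental domain is the closed fundamental alcove dilated by $2$, namely $\{H : \alpha_i(H)\ge 0 \text{ for all simple } \alpha_i,\ \delta(H)\le 2\}$, taken modulo the diagram-automorphism group $\Omega=P^\vee/Q^\vee$.

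The core step is then to solve, on the extended diagram with marks $a_0=1$ and $a_i=n_{\alpha_i}(\delta)$, the equation $\sum_{i=0}^{l}a_i s_i=2$ in non-negative integers $s_i=\alpha_i(H_0)$, with $s_0=2-\delta(H_0)$, where a node is non-compact exactly when $s_i$ is odd. The solution in which a single node carries $s_i=2$ forces $\epsilon\equiv 0$ and is excluded by $\frak{p}\neq 0$, so two families remain: either one mark-$2$ node carries $s_i=1$, giving a unique non-compact simple root $\nu$ with $n_\nu(\delta)=2$; or two mark-$1$ nodes carry $s_i=1$. In the second family I would invoke the standard fact that the mark-$1$ nodes of the extended diagram are exactly the $\Omega$-orbit of the affine node $\alpha_0$, on which $\Omega$ acts simply transitively (so that their number equals $|P^\vee/Q^\vee|$). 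Hence I may move one of the two non-compact mark-$1$ nodes onto $\alpha_0$; the remaining one is then the unique non-compact simple root $\nu$, with $n_\nu(\delta)=1$. In both families, deleting $\nu$ from the extended diagram returns the Dynkin diagram of $\frak{k}=\frak{g}^\theta$.

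Finally I would match the two families to the dichotomy in the statement through the $\mathbb{Z}$-grading of $\frak{g}$ defined by $H_0$, i.e.\ by $n_\nu$: when $n_\nu(\delta)=1$ the grading has length three, $\frak{k}=\frak{l}_0$ centralizes $H_0$ and so has non-zero centre; when $n_\nu(\delta)=2$ the grading has length five, $\frak{k}=\frak{l}_{-2}\oplus\frak{l}_0\oplus\frak{l}_2$, and $\frak{k}$ is semisimple. The step I expect to be the genuine obstacle is the third one: eliminating the configuration with two non-compact simple roots by means of the $\Omega$-symmetry, and checking that this symmetry is realised by parity-preserving inner automorphisms, so that it is a legitimate change of base and not a change of real form. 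The additivity of $\epsilon$ and the construction of $H_0$ are routine; the bookkeeping that locates the centre-versus-semisimple dichotomy carries the real content.
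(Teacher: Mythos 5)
The paper does not prove this statement: it is quoted from Borel--de Siebenthal \cite{bds} and used as a black box (the author explicitly proves Theorem \ref{main} \emph{assuming} the existence asserted here), so there is no in-paper argument to compare against. Judged on its own, your proof is correct and is the standard derivation via the extended Dynkin diagram (Kac coordinates of an inner involution). The step you flag as the genuine obstacle --- realising the $\Omega$-symmetry by legitimate moves --- is in fact harmless: an element of $\Omega$ stabilising the dilated alcove has the form $t_{2p}w$ with $w\in W_\frak{g}$ and $p\in P^\vee$, and translation by $2P^\vee$ (not merely $2Q^\vee$) preserves every parity $\alpha(H_0)\bmod 2$ because $\alpha(p)\in\mathbb{Z}$; so the normalisation is a change of base composed with a parity-preserving translation, not a change of real form. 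Two small points you should write out in a finished version: (i) the extension of the parity function $\epsilon$ from $\Delta$ to a homomorphism on the root lattice $Q$ needs the remark that every positive root is reached from a simple root by successively adding simple roots while staying inside $\Delta$ (the same device the paper uses to prove irreducibility of the $\frak{l}_0$-modules $\frak{l}_{\pm1}$, $\frak{l}_{\pm2}$); (ii) in the concluding dichotomy, semisimplicity of $\frak{k}$ when $n_\nu(\delta)=2$ follows because $\Delta_\frak{k}$ then contains $\Phi_0\cup\{\delta\}$, which spans $\frak{h}^*$, whereas for $n_\nu(\delta)=1$ the compact roots are exactly those with $n_\nu=0$ and hence annihilate a non-zero central element of $\frak{h}$; since having non-zero centre and being semisimple are mutually exclusive properties of the fixed real form, the two alcove configurations match the two cases of the statement as claimed.
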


A positive root system $P$ of $\Delta$ as in the Theorem \ref{base} is said to be a Borel-de Siebenthal positive root system. If $\delta$ is the highest root of $\frak{g}$ with respect to a 
Borel-de Siebenthal positive root system, then since $ [\frak{k}, \frak{p}] \subset \frak{p}, [\frak{p}, \frak{p}] \subset \frak{k},$ we have $\frak{g}^\delta \subset 
\frak{p},$ if $\frak{k}$ has non-zero centre; and $\frak{g}^\delta \subset \frak{k},$ if $\frak{k}$ is semisimple. 
Fix a positive root system $P_\frak{k}$ of $\Delta_\frak{k}.$ Let $W_\frak{g}$ be the Weyl group of $\frak{g}$ relative to the Cartan subalgebra $\frak{h},$  
$W_\frak{k}$ be the Weyl group of $\frak{k}$ relative to the Cartan subalgebra $\frak{h},$ and $w_\frak{k}^0 \in W_\frak{k}$ be the longest element with respect to 
the positive root system $P_\frak{k}.$ If $P$ is a Borel-de Siebenthal positive root system of $\Delta,$ then 
so is $wP$ for all $w \in W_\frak{k}.$ So we may assume that $P$ is a Borel-de Siebenthal positive root system of $\Delta$ containing $P_\frak{k}.$ 
Let $\Phi$ be the set of all simple roots in $P,$ $\nu \in \Phi$ be the unique non-compact root, $\delta$ be the highest root of $\frak{g}$ with respect to $P,$ 
and $\{\omega_\phi : \phi \in \Phi \}$ be the set of all fundamental weights corresponding to the set of simple roots $\Phi.$ 
For $i \in \mathbb{Z},$ define $\frak{l}_i = \{ X \in \frak{g} : [H_{\omega_\nu}, X] = iX \},$ where for any real valued linear function $\lambda$ on $i\frak{t}_0$ 
there exists unique $H_\lambda \in i\frak{t}_0$ such that 
$\lambda (H) = B (H, H_\lambda ) \textrm{ for all } H \in i\frak{t}_0, B$ denotes the Killing form of $\frak{g}.$ 
Put $\langle \lambda , \mu \rangle = B(H_\lambda, H_\mu)$ for any real valued linear functions $\lambda, \mu$ on $i\frak{t}_0.$
We have $[\frak{l}_0, \frak{l}_i] \subset \frak{l}_i$ for all $i \in \mathbb{Z},$ 
$\frak{p}= \frak{l}_{-1} \oplus \frak{l}_1,$ and 
\[\frak{k} = 
\begin{cases}
\frak{l}_0  & \textrm{if } \frak{k} \textrm{ has non-zero centre}, \\
\frak{l}_{-2} \oplus \frak{l}_0 \oplus \frak{l}_2 & \textrm{if } \frak{k} \textrm{ is semisimple}. \\
\end{cases}
\] 
Note that $\frak{l}_0$ is a reductive Lie subalgebra of $\frak{k},$ and $\frak{h}$ is a Cartan subalgebra of $\frak{l}_0.$ Let $\Delta_0 = \Delta(\frak{l}_0, \frak{h}).$ 
Then $P_0 = \Delta_0 \cap P_\frak{k}$ is a positive root system of $\Delta_0,$ and $\Phi_0 = \Phi \setminus \{\nu \}$ is the set of all simple roots in $P_0.$ Let 
$W_{\frak{l}_0}$ be the Weyl group of $\frak{l}_0$ relative to the Cartan subalgebra $\frak{h},$ and $w_{\frak{l}_0}^0 \in W_{\frak{l}_0}$ be the longest element 
with respect to the positive root system $P_0.$ Choose $X_\alpha (\neq 0) \in \frak{g}^\alpha,$ for all $\alpha \in \Delta.$ 

{\bf The adjoint representations of $\frak{l}_0$ on $\frak{l}_1$ and $\frak{l}_{-1}$ are irreducible:} 
Let $\beta \in \Delta$ be such that $\frak{g}^\beta \subset \frak{l}_1.$ Then $\beta$ can be written in the form $\beta = \nu +\phi_{i_1}+\phi_{i_2}+\cdots +\phi_{i_n}$ such that 
each partial sum from the left lies in $\Delta,$ where $\phi_{i_1}, \phi_{i_2}, \ldots , \phi_{i_n} \in \Phi_0,$ so that $ad(X_{\phi_{i_n}})\circ \cdots \circ ad(X_{\phi_{i_1}})(\frak{g}^\nu)=
\frak{g}^\beta.$ Thus $\frak{l}_1$ is an irreducible $\frak{l}_0$-module with lowest weight $\nu$ with respect to the positive root system $P_0.$ 
Similarly $\frak{l}_{-1}$ is an irreducible $\frak{l}_0$-module with highest weight $-\nu.$ 

{\bf If $\frak{k}$ is semisimple, the adjoint representations of $\frak{l}_0$ on $\frak{l}_2$ and $\frak{l}_{-2}$ are irreducible:} 
Since $\frak{g}$ is a simple complex Lie algebra with highest weight $\delta,$ any root $\alpha \in P,$ can be written as $\alpha = \delta -\phi_{j_1}-\phi_{j_2}-\cdots -\phi_{j_m}$ 
such that each partial sum from the left lies in $\Delta,$ where $\phi_{j_1}, \phi_{j_2}, \ldots , \phi_{j_m} \in \Phi,$ so that $ad(X_{-\phi_{j_m}})\circ \cdots \circ ad(X_{-\phi_{j_1}})
(\frak{g}^\delta)=\frak{g}^\alpha.$ Now $\frak{g}^\alpha \subset \frak{l}_2$ {\it iff} $\phi_{j_1}, \phi_{j_2}, \ldots , \phi_{j_m} \in \Phi_0.$
 Thus $\frak{l}_2$ is an irreducible $\frak{l}_0$-module with highest weight $\delta$ with respect to the positive root system $P_0.$ 
Similarly $\frak{l}_{-2}$ is an irreducible $\frak{l}_0$-module with lowest weight $-\delta.$ Let $\epsilon \in P$ be the lowest root such that $\frak{g}^\epsilon \subset \frak{l}_2.$ 
Then $\Phi_\frak{k}= \Phi_0 \cup \{\epsilon\}$ is the set of all simple roots in $P_\frak{k}.$ 

{\bf If $\frak{k}$ is semisimple, the adjoint representation of $\frak{k}$ on $\frak{p}$ is irreducible:} 
$\frak{p}= \frak{l}_{-1} \oplus \frak{l}_1,$ and if $\frak{k}$ is semisimple, $\frak{k}=\frak{l}_{-2} \oplus \frak{l}_0 \oplus \frak{l}_2.$ Since $\frak{k}$ is semisimple, and $\frak{p}$ is a 
finite dimensional $\frak{k}$-representation, $\frak{p}$ is completely reducible. Also $\frak{p}= \frak{l}_{-1} \oplus \frak{l}_1$ is the irreducible decomposition as an $\frak{l}_0$-module, 
$\frak{l}_0$ is a reductive subalgebra of $\frak{k}.$ Thus $\frak{p}$ is irreducible as $\frak{k}$-module {\it iff} $\frak{l}_1$ is not invariant under $\frak{k}.$ Let $\alpha \in \Delta$ be 
such that $\frak{g}^\alpha \subset \frak{l}_2.$ Then $\alpha = \phi_{k_1} + \cdots + \phi_{k_r} +\nu +\phi_{k_{r+1}}  +\cdots +\phi_{k_s}+\nu +\phi_{k_{s+1}} +\cdots +\phi_{k_t}$ 
such that each partial sum from the left lies in $\Delta,$ where $\phi_{k_1}, \ldots , \phi_{k_t} \in \Phi_0.$ Let $\beta = \phi_{k_1} + \cdots + \phi_{k_r} +\nu +\phi_{k_{r+1}} 
+\cdots +\phi_{k_s}.$ Then $\beta, \beta +\nu \in \Delta; \frak{g}^{-\beta} \subset \frak{l}_{-1}, \frak{g}^\nu \subset \frak{l}_1, \frak{g}^{-(\beta+\nu)} \subset \frak{l}_{-2}.$ 
Since $-\beta = -(\beta + \nu) + \nu,$ so $[\frak{l}_{-2}, \frak{l}_1] \neq 0,$ and clearly $[\frak{l}_{-2}, \frak{l}_1] \subset \frak{l}_{-1}$ which proves that $\frak{l}_1$ is not 
$\frak{k}$-invariant and consequently $\frak{p}$ is an irreducible $\frak{k}$-module. Let $\lambda \in P$ be the highest root such that $\frak{g}^\lambda \subset \frak{l}_1.$ Then 
$\lambda$ is the highest weight of the $\frak{k}$-module $\frak{p}$ (with respect to the positive root system $P_\frak{k}$) as well as of the $\frak{l}_0$-module $\frak{l}_1$ 
(with respect to the positive root system $P_0$). 

If $\alpha \in \Delta$ (respectively, $\alpha \in \Delta_\frak{k}$), let $n_\phi (\alpha)$ (respectively, $c_\phi (\alpha)$) denote the coefficient of $\phi$ in $\alpha$ when expressed 
as the sum of elements in $\Phi$ (repectively, $\Phi_\frak{k}$) for all $\phi \in \Phi$ (respectively, for all $\phi \in \Phi_\frak{k}$). If $\frak{k}$ is semisimple, let $C$ denote 
the component of the Dynkin diagram of $\frak{k}$ containing $\epsilon,$ and $\frak{k}_1$ denote the simple ideal of $\frak{k}$ whose Dynkin diagram is $C.$ 
Then $\delta$ is the highest root of $\frak{k}_1.$ 

\begin{lemma}\label{l1} 
Assume that $\frak{k}$ is semisimple. Let $\phi' \in \Phi_0$ be such that $n_{\phi'}(\delta) = 1,$ and 
$\phi \in \Phi_0$ be such that $w_{\frak{l}_0}^0 (\phi') = -\phi.$ Then $n_\phi(\lambda)=1, 
n_\phi(\epsilon)=1,$ and 
\[n_\phi(\delta) = 
\begin{cases}
1  & \textrm{if } \phi \notin C, \\
2 & \textrm{if } \phi \in C. \\
\end{cases}
\] 
\end{lemma}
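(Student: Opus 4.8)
The plan is to reduce all three assertions to a single coefficient computation for $\lambda,$ exploiting the identities $\lambda = w_{\frak{l}_0}^0(\nu)$ and $\epsilon = w_{\frak{l}_0}^0(\delta)$ recorded above, together with the involution $\sigma := -w_{\frak{l}_0}^0$ of $\Phi_0,$ for which $\sigma(\phi') = \phi$ and hence $\sigma(\phi) = \phi'.$ First I would record two bookkeeping identities. Applying $w_{\frak{l}_0}^0$ to $\delta = 2\nu + \sum_{\psi \in \Phi_0} n_\psi(\delta)\psi$ and using $w_{\frak{l}_0}^0(\nu) = \lambda,\ w_{\frak{l}_0}^0(\psi) = -\sigma(\psi),$ then substituting $\lambda = \nu + \sum_\chi n_\chi(\lambda)\chi$ and re-indexing, yields $n_\chi(\epsilon) = 2n_\chi(\lambda) - n_{\sigma(\chi)}(\delta)$ for every $\chi \in \Phi_0.$ Taking $\chi = \phi$ and using $n_{\sigma(\phi)}(\delta) = n_{\phi'}(\delta) = 1$ gives $n_\phi(\epsilon) = 2n_\phi(\lambda) - 1;$ thus the assertions $n_\phi(\lambda)=1$ and $n_\phi(\epsilon)=1$ are equivalent, and it suffices to prove one. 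Second, comparing the expansion of $\delta$ in the basis $\Phi$ with its expansion in $\Phi_\frak{k} = \Phi_0 \cup \{\epsilon\}$ and substituting $\epsilon = 2\nu + \sum_\psi n_\psi(\epsilon)\psi,$ the $\nu$-coefficients force $c_\epsilon(\delta) = 1,$ and the $\psi$-coefficients give $n_\psi(\delta) = n_\psi(\epsilon) + c_\psi(\delta)$ for all $\psi \in \Phi_0.$

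To compute $n_\phi(\lambda)$ directly I would pass to coweights. Write $\varpi_\psi^\vee$ for the fundamental coweights of $\frak{g}$ dual to $\Phi$ and $\varpi_{\psi,\frak{l}_0}^\vee$ for those of $\frak{l}_0$ dual to $\Phi_0.$ Then $n_\phi(\lambda) = \langle \lambda, \varpi_\phi^\vee \rangle = \langle \nu, w_{\frak{l}_0}^0\varpi_\phi^\vee\rangle.$ Since $w_{\frak{l}_0}^0$ fixes the centre of $\frak{l}_0$ and sends the $\frak{l}_0$-dominant coweight $\varpi_{\phi,\frak{l}_0}^\vee$ to $-\varpi_{\phi',\frak{l}_0}^\vee,$ while $\varpi_\phi^\vee$ decomposes as $\varpi_{\phi,\frak{l}_0}^\vee$ plus its central part, one gets $w_{\frak{l}_0}^0\varpi_\phi^\vee = \varpi_\phi^\vee - \varpi_{\phi,\frak{l}_0}^\vee - \varpi_{\phi',\frak{l}_0}^\vee.$ Pairing with $\nu$ and using $\langle \nu, \varpi_\phi^\vee\rangle = 0$ gives $n_\phi(\lambda) = a_\phi + a_{\phi'},$ where $a_\psi := -\langle \nu, \varpi_{\psi,\frak{l}_0}^\vee\rangle \ge 0.$ The goal thereby becomes the single numerical identity $a_\phi + a_{\phi'} = 1.$

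Granting $n_\phi(\lambda)=1,$ the remainder falls out. By the first identity $n_\phi(\epsilon) = 1,$ and then $n_\phi(\delta) = 1 + c_\phi(\delta).$ If $\phi \notin C,$ then $\delta,$ being the highest root of $\frak{k}_1,$ is supported on $C,$ so $c_\phi(\delta) = 0$ and $n_\phi(\delta) = 1;$ if $\phi \in C$ it remains only to show $c_\phi(\delta) = 1,$ which gives $n_\phi(\delta) = 2.$

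The main obstacle is exactly the two local facts $a_\phi + a_{\phi'} = 1$ and, for $\phi \in C,$ $c_\phi(\delta) = 1;$ this is where the hypothesis $n_{\phi'}(\delta)=1$ must enter essentially, since all the identities assembled above are mutually equivalent reformulations and cannot by themselves pin the value. I would settle these by analysing the position of $\phi'$ and $\phi=\sigma(\phi')$ relative to $\nu$ (for the first) and of $\phi$ relative to $\epsilon$ inside $C$ (for the second): the condition $n_{\phi'}(\delta)=1$ singles out $\phi'$ as a multiplicity-one node of $\delta,$ which constrains the bonds joining $\nu$ to $\Phi_0$ enough to force $a_\phi + a_{\phi'}=1,$ and constrains the highest root of $\frak{k}_1$ so that its $\phi$-coefficient is $1.$ A uniform verification of these two facts, if necessary by running through the short list of admissible local configurations, completes the argument; I have confirmed both in the two Borel--de Siebenthal systems of $\frak{b}_3$ (giving $\frak{k}$ of type $A_3$ and of type $A_1^{\,3}$).
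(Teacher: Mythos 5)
There is a genuine gap: the two ``local facts'' you defer to the end --- $a_\phi+a_{\phi'}=1$ and, for $\phi\in C,$ $c_\phi(\delta)=1$ --- are precisely the content of the lemma, and your proposal does not prove them. Checking them in $\frak{b}_3$ and gesturing at ``running through the short list of admissible local configurations'' is not an argument, and no such list is exhibited. Moreover, your diagnosis that the assembled identities ``cannot by themselves pin the value'' is mistaken: they can, once supplemented by two elementary observations that your reduction omits. First, $\lambda$ is itself a positive root and $\delta$ is the highest root, so $n_{\psi}(\lambda)\le n_{\psi}(\delta)$ for every $\psi\in\Phi;$ in particular $n_{\phi'}(\lambda)\le n_{\phi'}(\delta)=1,$ which is exactly where the hypothesis enters. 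Second, rewriting your first bookkeeping identity as $2\lambda=\epsilon+\sum_{\psi\in\Phi_0}n_{\sigma(\psi)}(\delta)\psi$ and using $n_\psi(\epsilon)\ge 0$ together with $n_{\sigma(\psi)}(\delta)>0$ gives $n_\psi(\lambda)>0$ for \emph{all} $\psi\in\Phi_0.$ Finally, applying $w_{\frak{l}_0}^0$ to $\lambda=\nu+\sum_\psi n_\psi(\lambda)\psi$ and using $w_{\frak{l}_0}^0(\lambda)=\nu$ yields the symmetry $n_\psi(\lambda)=n_{\sigma(\psi)}(\lambda).$ Combining: $0<n_{\phi'}(\lambda)\le 1$ forces $n_{\phi'}(\lambda)=1,$ hence $n_\phi(\lambda)=n_{\phi'}(\lambda)=1.$ No case analysis and no passage to coweights is needed; your $a_\phi+a_{\phi'}=1$ is then a consequence, not an input.

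The second fact also needs no local analysis. From $n_\chi(\epsilon)=2n_\chi(\lambda)-n_{\sigma(\chi)}(\delta)$ with $\chi=\phi'$ you get $n_{\phi'}(\epsilon)=2-n_\phi(\delta),$ and since $n_{\phi'}(\epsilon)\ge 0$ and $n_\phi(\delta)>0$ this gives $n_\phi(\delta)\in\{1,2\}.$ Your identity $n_\phi(\delta)=n_\phi(\epsilon)+c_\phi(\delta)=1+c_\phi(\delta)$ then forces $c_\phi(\delta)\in\{0,1\},$ and since $\delta$ is the highest root of the simple ideal $\frak{k}_1$ with diagram $C,$ one has $c_\phi(\delta)>0$ exactly when $\phi\in C.$ This is the paper's route, and it closes both of the points you left open. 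As written, your proposal is a correct chain of reformulations that stops short of the lemma's actual content.
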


\begin{proof} 
We have $\delta = 2\nu + \sum_{\psi \in \Phi_0} n_\psi (\delta) \psi,$ with $n_{\phi'}(\delta)=1.$ Let $w_{\frak{l}_0}^0 (\psi) = -\psi'$ for all $\psi \in \Phi_0.$ 
Since $w_{\frak{l}_0}^0$ has order $2, w_{\frak{l}_0}^0 (\phi') = -\phi$ implies $w_{\frak{l}_0}^0 (\phi) = -\phi',$ consistent with the above notation. 
Then $w_{\frak{l}_0}^0(\delta) = 2w_{\frak{l}_0}^0(\nu) - \sum_{\psi \in \Phi_0} n_\psi (\delta) \psi',$ that is $\epsilon = 2\lambda - \sum_{\psi \in \Phi_0} n_{\psi'}(\delta) \psi.$

So $2\lambda = \epsilon + \sum_{\psi \in \Phi_0} n_{\psi'}(\delta) \psi.$ Since $n_{\psi'}(\delta) > 0$ for all $\psi \in \Phi_0,$ we have $n_\psi(\lambda) > 0$ for all 
$\psi \in \Phi_0.$ Again $\lambda = \nu + \sum_{\psi \in \Phi_0} n_\psi(\lambda) \psi \implies w_{\frak{l}_0}^0(\lambda) = w_{\frak{l}_0}^0(\nu) - \sum_{\psi \in \Phi_0} n_\psi (\lambda) \psi'  
\implies \nu = \lambda - \sum_{\psi \in \Phi_0} n_{\psi}(\lambda) \psi'.$ So $\lambda = \nu + \sum_{\psi \in \Phi_0} n_{\psi}(\lambda) \psi'.$ Hence $n_\psi(\lambda) = n_{\psi'}(\lambda)$ 
for all $\psi \in \Phi_0.$ Since $0 \le n_{\phi'}(\lambda) \le n_{\phi'}(\delta) = 1,$ and $n_{\phi'}(\lambda) > 0,$ we have $n_\phi(\lambda)=n_{\phi'}(\lambda)=1.$ 

Now $\epsilon = 2\lambda - \sum_{\psi \in \Phi_0} n_{\psi'}(\delta) \psi,$ and $n_{\phi'}(\delta) = 1, n_\phi(\lambda) = 1.$ So $n_\phi (\epsilon) = 1,$ and 
$n_{\phi'}(\epsilon)= 2 - n_\phi (\delta).$ Since $n_{\phi'}(\epsilon) \ge 0,$ and $n_{\phi}(\delta) > 0,$ we have $n_{\phi}(\delta) = 1$ or $2.$

We have $\delta = \epsilon + \sum_{\psi \in \Phi_0} c_\psi (\delta) \psi,$ with $c_\phi (\delta) > 0$ {\it iff} $\phi \in C.$ Now $n_\phi(\delta) = 1$ or $2,$ and 
$n_\phi(\epsilon) = 1.$ Thus $c_\phi (\delta) = 0$ {\it iff} $\phi \notin C,$ and $c_\phi (\delta) =1$ {\it iff} $\phi \in C.$ Consequently, 
$n_\phi (\delta) = 1$ {\it iff} $\phi \notin C,$ and $n_\phi (\delta) =2$ {\it iff} $\phi \in C.$
\end{proof}

\begin{remark}\label{rl1} 
If $\frak{k}$ is semisimple, $\phi' \in \Phi_0$ is such that $n_{\phi'}(\delta)=1,$
$\phi \in \Phi_0$ is such that $w_{\frak{l}_0}^0 (\phi') = -\phi,$ and 
$\delta_2$ is the highest weight of the simple ideal of $\frak{k}$ whose Dynkin diagram is the component of the Dynkin diagram of $\frak{k}$ 
containing $\phi,$ then $c_\phi(\delta_2)=1.$ 
\end{remark}

\begin{lemma}\label{l2} 
Let $\frak{l}$ be a complex simple Lie algebra, $\frak{t}$ be a Cartan subalgebra, $\Delta^+$ be a positive root system of $\Delta(\frak{l}, \frak{t})$, and $V$ be a 
non-trivial irreducible $\frak{l}$-module with highest weight $\xi.$ Then the lowest weight of $V$ is given by 
$\xi - \sum_{ 1 \le i \le n} m_i \alpha_i, m_i \in \mathbb{N};$ where $\{\alpha_1, \alpha_2, \ldots , \alpha_n\}$ is the set of all simple roots in $\Delta^+.$ 
\end{lemma}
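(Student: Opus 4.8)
The plan is to pin down the lowest weight explicitly and then to show, by a short sign analysis against the Gram matrix of the simple roots together with connectedness of the Dynkin diagram, that every simple root must occur in the difference. First I would recall two standard structural facts about the irreducible module $V$ with highest weight $\xi$: the highest weight is dominant, so $\langle \xi, \alpha_i\rangle \ge 0$ for every simple root $\alpha_i$; and the lowest weight is $\mu := w_0\xi$, where $w_0$ is the longest element of the Weyl group, which is anti-dominant, so $\langle \mu, \alpha_i\rangle \le 0$ for all $i$. Because every weight of $V$ lies below $\xi$ in the root order, in particular $\mu \preceq \xi$, we may write $\xi - \mu = \sum_{i} m_i\alpha_i$ with all $m_i \ge 0$; the whole content of the lemma is the strict positivity $m_i \ge 1$. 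I would also observe that non-triviality forces $\xi \neq 0$ (a simple Lie algebra has no non-trivial one-dimensional module), hence $\mu \neq \xi$, so that $(m_i)$ is not the zero vector.

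Next I would pair $\xi - \mu$ with each simple root. Since $\langle \xi - \mu, \alpha_i\rangle = \langle \xi, \alpha_i\rangle - \langle \mu, \alpha_i\rangle$ is a difference of a non-negative and a non-positive quantity, we obtain $\langle \xi - \mu, \alpha_i\rangle \ge 0$ for every $i$. Expanding the left-hand side through $\xi - \mu = \sum_j m_j\alpha_j$ gives the system $\sum_j \langle \alpha_i, \alpha_j\rangle\, m_j \ge 0$ for all $i$, in which $G_{ij} := \langle \alpha_i, \alpha_j\rangle$ is the Gram matrix of the simple roots: its diagonal entries satisfy $G_{ii} = \langle \alpha_i, \alpha_i\rangle > 0$, its off-diagonal entries satisfy $G_{ij}\le 0$ for $i\neq j$ (the angle between distinct simple roots is obtuse or right), and $G_{ij}=0$ precisely when $\alpha_i$ and $\alpha_j$ are non-adjacent in the Dynkin diagram.

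The key step, and the only place where the simplicity of $\frak{l}$ enters, is a connectedness argument. Let $S = \{\, i : m_i > 0 \,\}$, which I have already shown to be non-empty. Suppose $k \notin S$, so $m_k = 0$. Then the inequality $\sum_j G_{kj} m_j \ge 0$ reduces to $\sum_{j\neq k} G_{kj} m_j \ge 0$; but every term here is $\le 0$, since $G_{kj}\le 0$ and $m_j\ge 0$, so each term must vanish, forcing $G_{kj}=0$ whenever $m_j>0$. In graph-theoretic terms, no vertex outside $S$ is adjacent to any vertex of $S$, i.e. there are no edges between $S$ and its complement. Since $\frak{l}$ is simple its Dynkin diagram is connected, and a connected graph admits no partition into two non-empty parts with no edge between them; hence $S = \{1, \ldots, n\}$ and every $m_i \ge 1$, as claimed.

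I do not expect a genuine obstacle here: the mathematical crux is exactly the short sign-and-connectedness argument, and the remaining delicate point is only bookkeeping — making sure the dominance inequalities, the pairing, and the signs of $G_{ij}$ line up correctly. An alternative route, which would replace this crux, is to write $\mu = -\tau(\xi)$ with $\tau = -w_0$ the opposition involution permuting the simple roots and then invoke the positivity of all entries of the inverse Cartan matrix; I would prefer the self-contained version above, since it avoids citing that external positivity fact and uses only the invariant form $\langle\,\cdot\,,\,\cdot\,\rangle$ already in play.
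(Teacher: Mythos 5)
Your proof is correct and follows essentially the same route as the paper's: both arguments split the simple roots into those with $m_i>0$ and those with $m_i=0$, use dominance of $\xi$ and anti-dominance of the lowest weight together with $\langle\alpha_i,\alpha_j\rangle\le 0$ to force orthogonality between the two sets, and then invoke connectedness of the Dynkin diagram (simplicity of $\frak{l}$) to conclude. The only cosmetic difference is that you establish non-emptiness of the positive-coefficient set via $w_0\xi\neq\xi$, while the paper exhibits an explicit simple root $\alpha_{i_0}$ with $\xi-\alpha_{i_0}$ a weight; both are fine.
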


\begin{proof} 
Let $\langle . , . \rangle$ be an $Aut (\Delta(\frak{l}, \frak{t}))$ -invariant inner product on $\frak{t}_\mathbb{R}^* = \mathbb{R} \alpha_1 + \mathbb{R}\alpha_2 + 
\cdots + \mathbb{R}\alpha_n,$ and $\eta$ be the lowest weight of $V.$ Clearly $\eta = \xi - \sum_{ 1 \le i \le n} m_i \alpha_i,$ where 
$m_i \in \mathbb{N} \cup \{0\}$ for all $i.$ Let $S_1$ consists of all those simple roots $\alpha_i$ for which $m_i > 0,$ and 
$S_2$ consists of all those simple roots $\alpha_j$ for which $m_j = 0.$ Then the set of all simple roots $\{\alpha_1, \alpha_2, \ldots , \alpha_n\}$ is the 
disjoint union of $S_1$ and $S_2.$ Since $\xi$ is dominant weight, and $\xi \neq 0, \langle \xi, \alpha_{i_0} \rangle > 0,$ and so 
$\xi - \alpha_{i_0}$ is a weight of $V$ for some $1 \le i_0 \le n.$ Consequently $m_{i_0} > 0,$ and $\alpha_{i_0} \in S_1.$ So $S_1$ is non-empty. 
If $\alpha_j \in S_2,$ then $\langle \eta , \alpha_j \rangle = \langle \xi , \alpha_j \rangle - \sum_{1 \le i \le n, m_i > 0} m_i \langle \alpha_i , \alpha_j \rangle 
\ge 0,$ as $\langle \xi , \alpha_j \rangle \ge 0, \langle \alpha_i , \alpha_j \rangle \le 0$ for all $\alpha_i \in S_1.$ Since $\eta$ is a negative dominant weight, 
we have $\langle \eta , \alpha_j \rangle = 0,$ and so $\langle \alpha_i , \alpha_j \rangle = 0$ for all $\alpha_i \in S_1.$ Hence $S_1$ is orthogonal to $S_2,$ 
and $S_1$ is non-empty. Since $\frak{l}$ is simple, we have $S_1 = \{\alpha_1, \alpha_2, \ldots , \alpha_n\}.$ Consequently $S_2$ is empty and the lemma is proved. 
\end{proof}

Now we are ready to prove Theorem \ref{main}. 

\noindent
{\bf Proof of Theorem \ref{main}:} 
Let $P'$ be a Borel-de Siebenthal positive root system of $\Delta$ containing $P_\frak{k},$ $\Phi'$ be the set of all simple roots of $P',$ and $\nu' \in \Phi'$ be the unique 
non-compact root. Then we have a gradation $\frak{g}= \frak{l}'_{-2} \oplus \frak{l}'_{-1} \oplus \frak{l}'_0 \oplus \frak{l}'_1 \oplus \frak{l}_2$ with $\frak{k} = 
\frak{l}'_{-2} \oplus \frak{l}'_0 \oplus \frak{l}'_2, \frak{p} = \frak{l}'_{-1} \oplus \frak{l}'_1,$ and $[\frak{l}'_0, \frak{l}'_i] \subset \frak{l}'_i$ for all $i.$ 
The subalgebra $\frak{l}'_0$ is a reductive, and $\frak{h}$ is a Cartan subalgebra of $\frak{l}'_0.$ Let $\Delta'_0 = \Delta(\frak{l}'_0, \frak{h}).$ 
Then $P'_0 = \Delta'_0 \cap P_\frak{k}$ is a positive root system of $\Delta'_0,$ and $\Phi'_0 = \Phi' \setminus \{\nu' \}$ is the set of all simple roots in $P'_0.$ Let 
$W_{\frak{l}'_0}$ be the Weyl group of $\frak{l}'_0$ relative to the Cartan subalgebra $\frak{h},$ and $w_{\frak{l}'_0}^0 \in W_{\frak{l}'_0}$ be the longest element 
with respect to the positive root system $P'_0.$ Also the adjoint representation of $\frak{l}'_0$ on $\frak{l}'_1$ is irreducible, and $\frak{l}'_1$ is the 
$\frak{l}'_0$-submodule of $\frak{p}$ with highest weight $\lambda$ and lowest weight $\nu'$ with respect to the positive root system $P'_0.$ 
Let $\epsilon' \in P'$ be the lowest root such that $\frak{g}^{\epsilon'} \subset \frak{l}'_2.$ Then $\Phi'_0 \cup \{\epsilon'\}$ is the set of all simple roots in 
$P' \cap \Delta_\frak{k}=P_\frak{k}.$ So $\Phi'_0 \cup \{\epsilon'\}=\Phi_0 \cup \{\epsilon\},$ which implies either $\Phi'_0 = \Phi_0,$ or $\Phi'_0 = \Phi_\frak{k} 
\setminus \{\phi \}$ for some $\phi \in \Phi_0.$ Now $\Phi'_0 = \Phi_0 \implies \frak{l}'_0 = \frak{l}_0, \frak{l}'_1=\frak{l}_1, \nu'=\nu;$ so that 
$\Phi'=\Phi,$ and hence $P'=P.$ Assume that $P' \neq P$ so that $\Phi'_0 = \Phi_\frak{k} \setminus \{\phi \}$ for some $\phi \in \Phi_0,$ and $\epsilon'=\phi.$
It only remains to prove that if $w_{\frak{l}_0}^0 (\phi') = -\phi,$ then $n_{\phi'}(\delta)=1.$ Since $\phi \in P'$ with $\frak{g}^{\phi} \subset \frak{l}'_2,$ 
we have $\phi = 2 \nu' + m_\epsilon \epsilon + \sum_{\psi \in (\Phi_0 \setminus \{\phi\})} m_\psi \psi;$ $m_\epsilon, m_\psi \in \mathbb{N} \cup \{0\}$ for all 
$\psi \in \Phi_0 \setminus \{\phi\}.$ Also $w_{\frak{l}'_0}^0(\phi)$ is the highest weight of the $\frak{l}'_0$-module $\frak{l}'_2$ with respect to the positive 
root system $P'_0.$ So $w_{\frak{l}'_0}^0(\phi) = \phi + k_\epsilon \epsilon + \sum_{\psi \in (\Phi_0 \setminus \{\phi\})} k_\psi \psi;$ 
$k_\epsilon, k_\psi \in \mathbb{N} \cup \{0\}$ for all $\psi \in \Phi_0 \setminus \{\phi\}.$ This implies 
$w_{\frak{l}'_0}^0(\phi) = 2\nu' + (m_\epsilon+k_\epsilon) \epsilon + \sum_{\psi \in (\Phi_0 \setminus \{\phi\})} (m_\psi + k_\psi) \psi = 
2\nu' + d_\epsilon \epsilon + \sum_{\psi \in (\Phi \setminus \{\phi\})} d_\psi \psi,$ where $d_\epsilon = m_\epsilon + k_\epsilon, 
d_\psi = m_\psi + k_\psi$ for all $\psi \in \Phi_0 \setminus \{\phi\}.$ So $-w_{\frak{l}'_0}^0(\phi)=-2\nu' - d_\epsilon \epsilon 
- \sum_{\psi \in (\Phi_0 \setminus \{\phi\})} d_\psi \psi = -2\nu' + d'_\epsilon w_{\frak{l}'_0}^0(\epsilon) + \sum_{\psi \in (\Phi_0 \setminus \{\phi\})} 
d'_\psi w_{\frak{l}'_0}^0(\psi)$ for some $d'_\epsilon, d'_\psi \in \mathbb{N} \cup \{0\}$ for all $\psi \in \Phi_0 \setminus \{\phi\}.$ Thus 
$d'_\epsilon w_{\frak{l}'_0}^0(\epsilon) = 2\nu' - \sum_{\psi \in \Phi_0} d'_\psi w_{\frak{l}'_0}^0(\psi),$ where $d'_\phi=1.$ This implies 
$d'_\epsilon \epsilon = 2\lambda - \sum_{\psi \in \Phi_0} d'_\psi \psi.$ Since $n_\nu (\epsilon) = 2 = 2n_\nu(\lambda),$ we have $d'_\epsilon = 1.$ 
Hence $\epsilon = 2\lambda - \sum_{\psi \in \Phi_0} d'_\psi \psi \implies w_{\frak{l}_0}^0(\epsilon) = 2w_{\frak{l}'_0}^0(\lambda) + \sum_{\psi \in \Phi_0} d'_\psi \psi',$ 
where $w_{\frak{l}_0}^0 (\psi) = -\psi'$ for all $\psi \in \Phi_0.$ That is, $\delta = 2\nu + \sum_{\psi \in \Phi_0} d'_\psi \psi',$ so that $n_{\phi'}(\delta)=d'_\phi = 1.$

Conversely assume that $\phi' \in \Phi_0$ be such that $n_{\phi'}(\delta)=1,$
$\phi \in \Phi_0$ be such that $w_{\frak{l}_0}^0 (\phi') = -\phi,$ 
$\frak{l}'_0$ be the reductive subalgebra of $\frak{k}$ containing $\frak{h}$ and the Dynkin diagram of $[\frak{l}'_0, \frak{l}'_0]$ be 
the subdiagram of the Dynkin diagram of $\frak{k}$ with vertices $\Phi'_0 = \Phi_\frak{k} \setminus \{\phi\},$ and $\nu'$ be the lowest weight of the irreducible $\frak{l}'_0$-submodule 
$W$ of $\frak{p}$ with highest weight $\lambda.$ Let $P'_0$ be the positive root system associated with the simple system $\Phi'_0.$ Then $\Delta(\frak{l}'_0, \frak{h}) = 
\Delta([\frak{l}'_0, \frak{l}'_0],[\frak{l}'_0, \frak{l}'_0] \cap \frak{h})$ is given by $\Delta'_0 = P'_0 \cup (-P'_0).$ Let $\frak{l}'_\epsilon$ be the simple ideal of $\frak{l}'_0$ 
associated with the connected component of the Dynkin diagram of $\frak{l}'_0$ containing $\epsilon.$ Then $\frak{l}'_\epsilon$ is a subalgebra of $\frak{k}_1$ and 
$\frak{l}'_\epsilon = \frak{k}_1$ {\it iff} $\phi \notin C.$ Since $\langle \nu , \epsilon \rangle \neq 0$ and if $\phi \in C,$ $\langle \lambda , \psi \rangle \neq 0$ 
for some $\psi \in C \setminus \{\phi\}$ (see \S \ref{case}), 
we have the irreducible $\frak{l}'_\epsilon$-submodule of $W$ with highest weight $\lambda$ is non-trivial. 
So $\nu' = \lambda - \sum_{\psi \in \Phi'_0} c_\psi \psi,$ with $c_\epsilon > 0,$ by Lemma \ref{l2}. 
Since $P$ is Borel-de Siebenthal, $n_\nu(\epsilon) = 2, n_\nu (\lambda) = 1$ and $\nu' \in \Delta,$ we have $c_\epsilon = 1.$ Hence $\nu' \in \Delta_n \cap (-P)$ 
with $n_\phi (\nu') = 0,$ by Lemma \ref{l1}. Now we have $n_\nu(\epsilon)=2, n_\phi(\epsilon)=1, n_\nu (\nu')=-1,$ and $n_\phi(\nu')=0;$ which imply that 
$\Phi'_0 \cup \{\nu'\}$ is a basis of the dual space $\frak{h}^*.$ For $\alpha \in \Delta,$ let $m_\psi(\alpha)$ denote the coefficient of $\psi$ in $\alpha$ when expressed 
as the linear combination of elements in $\Phi'_0 \cup \{\nu'\}$ for all $\psi \in \Phi'_0 \cup \{\nu'\}.$ If $\beta \in \Delta_n$ with $n_\phi (\beta) = 1,$ then clearly 
$\beta$ is a weight of the module $W,$ and thus $m_{\nu'}(\beta)=1, m_\epsilon (\beta)= 1, m_\psi (\beta) \in \mathbb{N} \cup \{0\}$ for all $\psi \in \Phi'_0 \setminus \{\epsilon\}.$ 
Since $n_\phi (\lambda) = 1,$ $\nu, \nu'$ are the only minimal elements of the set $\{ \beta \in \Delta_n : n_\phi ( \beta) = 0\}$ of $\frak{k}$-weights of $\frak{p}.$ 
Thus $\nu'$ is the lowest element of $\{ \beta \in \Delta_n \cap (-P) : n_\phi (\beta)=0\}.$ Hence if $\beta \in \Delta_n \cap (-P)$ 
with $n_\phi (\beta)=0,$ then $m_{\nu'}(\beta) = 1, m_{\epsilon}(\beta) = 0, m_\psi (\beta) \in \mathbb{N} \cup \{0\}$ for all $\psi \in \Phi'_0 \setminus \{\epsilon\}.$ 
Thus we have if $\beta \in \Delta_n,$ then for all $\psi \in \Phi'_0 \cup \{\nu'\},\ m_\psi (\beta) \in \mathbb{Z}$ and are of same sign. Also $m_{\nu'}(\beta) = \pm 1.$ 
If $\alpha \in P_\frak{k},$ then $c_\phi (\alpha) = 0$ or $1,$ by Remark \ref{rl1}. If $c_\phi (\alpha) = 0,$ then clearly $m_{\nu'} (\alpha) = 0,$ and 
$m_\psi (\alpha) \in \mathbb{N} \cup \{0\}$ for all $\psi \in \Phi'_0.$ Since $n_\phi (\nu') = 0, n_\phi (\epsilon) = 1; 
\phi = 2\nu' + \epsilon + \sum_{\psi \in (\Phi'_0 \setminus \{\epsilon \})} m_\psi (\phi) \psi.$ 
We have $m_\psi (\phi) \in \mathbb{N} \cup \{0\}$ for all 
$\psi \in \Phi'_0 \setminus \{\epsilon \}.$ See \S \ref{case}. 
Then $m_\psi (\alpha) \in \mathbb{N} \cup \{0\}$ for all $\psi \in \Phi'_0$ and $m_{\nu'} (\alpha)=2,$ if 
$c_\phi (\alpha) = 1.$ Consequently $P'= \Phi'_0 \cup \{\nu'\}$ is a Borel-de Siebenthal positive root system of $\Delta$ containing $P_\frak{k}$ and 
the proof is complete.

\noindent 
\subsection{Case by case consideration:}\label{case}
1. $\frak{g}=\frak{b}_l (l \ge 2), \frak{g}_0 = \frak{so}(2p, 2l-2p+1), 2 \le p \le l:  \Phi = \{\phi_1, \phi_2, \ldots , \phi_l\}$ with $\nu = \phi_p,$ and $\delta = 
\phi_1+2\phi_2 + \cdots +2\phi_l.$ 

\begin{tikzpicture}

\draw  (0,0) circle [radius = 0.1]; 
\draw (1,0) circle [radius = 0.1]; 
\filldraw[black] (2.5,0) circle [radius = 0.1]; 
\draw (4,0) circle [radius = 0.1]; 
\draw (5,0) circle [radius = 0.1]; 
\node [above] at (0.05,0.05) {$\phi_1$}; 
\node [above] at (1.05,0.05) {$\phi_2$}; 
\node [above] at (2.55,0.05) {$\phi_p$}; 
 \node [above] at (4.25,0.05) {$\phi_{l-1}$}; 
\node [above] at (5.05,0.05) {$\phi_l$}; 
\node [left] at (-0.3,0) {$\frak{b}_l :$}; 

\draw (0.1,0) -- (0.9,0); 
\draw (1.1,0) -- (1.5,0);
\draw [dotted] (1.5,0) -- (2,0);
\draw (2,0) -- (2.4,0); 
\draw (2.6,0) -- (3,0); 
\draw [dotted] (3,0) -- (3.5,0); 
\draw (3.5,0) -- (3.9,0); 
\draw (4.9,0) -- (4.8,0.1); 
\draw (4.9,0) -- (4.8,-0.1); 
\draw (4.1,0.025) -- (4.85,0.025); 
\draw (4.1,-0.025) -- (4.85,-0.025); 

\draw (7,0) circle [radius = 0.1]; 
\draw (8,0) circle [radius = 0.1]; 
\draw (9.5,0) circle [radius = 0.1]; 
\draw (10.5,0) circle [radius = 0.1]; 
\draw (10.4,-1) circle [radius = 0.1]; 
\draw (11.5,0) circle [radius = 0.1]; 
\draw (13,0) circle [radius = 0.1]; 
\draw (14,0) circle [radius = 0.1]; 

\node [above] at (7.05,0.05) {$\phi_1$}; 
\node [above] at (8.05,0.05) {$\phi_2$}; 
\node [above] at (9.75,0.05) {$\phi_{p-2}$}; 
\node [above] at (10.75,0.05) {$\phi_{p-1}$}; 
\node [below] at (10.45,-1.05) {$\epsilon$}; 
\node [above] at (11.75,0.05) {$\phi_{p+1}$}; 
\node [above] at (13.25,0.05) {$\phi_{l-1}$}; 
\node [above] at (14.05,0.05) {$\phi_l$}; 
\node [left] at (6.7,0) {$\frak{k} :$}; 

\draw (7.1,0) -- (7.9,0); 
\draw (8.1,0) -- (8.5,0); 
\draw [dotted] (8.5,0) -- (9,0); 
\draw (9,0) -- (9.4,0); 
\draw (9.6,0) -- (10.4,0); 
\draw (9.6,0) -- (10.3,-0.95);
\draw (11.6,0) -- (12,0);
\draw [dotted] (12,0) -- (12.5,0); 
\draw (12.5,0) -- (12.9,0); 
\draw (13.9,0) -- (13.8,0.1); 
\draw (13.9,0) -- (13.8,-0.1); 
\draw (13.1,0.025) -- (13.85,0.025); 
\draw (13.1,-0.025) -- (13.85,-0.025); 

\end{tikzpicture} 

Here $P_\frak{k} = \{ \phi_i + \cdots +\phi_{j-1} : 1 \le i < j \le p\} \cup \{\phi_i + \cdots +\phi_{j-1} , \phi_i + \cdots +\phi_{j-1}+2\phi_j + \cdots +2\phi_l : p+1 \le i < j \le l \} 
\cup \{ \phi_i + \cdots +\phi_l : p+1 \le i \le l\} \cup \{\phi_i + \cdots +\phi_{j-1}+2\phi_j + \cdots +2\phi_l : 1 \le i < j \le p \},$ and $P \cap \Delta_n = 
\{ \phi_i + \cdots +\phi_l : 1 \le i \le p\} \cup \{\phi_i + \cdots +\phi_{j-1}, \phi_i + \cdots +\phi_{j-1}+2\phi_j + \cdots +2\phi_l : 1 \le i  \le p < j \le l \}.$ So $\epsilon = 
\phi_{p-1}+2\phi_p + \cdots +2\phi_l, \lambda = \phi_1 + \cdots +\phi_p+2\phi_{p+1} + \cdots +2\phi_l.$ We have $\langle \nu , \epsilon \rangle \neq 0$ and 
$\langle \lambda , \phi_1 \rangle \neq 0.$ The only possibility of $\phi' \in \Phi_0$ with $n_{\phi'} (\delta)=1$ is $\phi'=\phi_1.$ 

(i) Let $\phi'=\phi_1.$ Then $\phi = - w_{\frak{l}_0}^0 (\phi') = \phi_{p-1},$ and $\nu' = -(\phi_p +2\phi_{p+1}+\cdots +2\phi_l).$ So $\phi_{p-1} = \epsilon + 2\nu' + 
2\phi_{p+1} + \cdots + 2\phi_l.$ 

2. $\frak{g}=\frak{c}_l (l \ge 3), \frak{g}_0 = \frak{sp}(p, l-p), 1 \le p \le l-1:  \Phi = \{\phi_1, \phi_2, \ldots , \phi_l\}$ with $\nu = \phi_p,$ and $\delta= 
2\phi_1+\cdots+2\phi_{l-1}+\phi_l.$  

\begin{tikzpicture}

\draw (0,0) circle [radius = 0.1]; 
\filldraw[black] (1.5,0) circle [radius = 0.1]; 
\draw (3,0) circle [radius = 0.1]; 
\draw (4,0) circle [radius = 0.1]; 
\node [above] at (0.05,0.05) {$\phi_1$}; 
\node [above] at (1.55,0.05) {$\phi_p$}; 
\node [above] at (3.25,0.05) {$\phi_{l-1}$}; 
\node [above] at (4.05,0.05) {$\phi_l$}; 
\node [left] at (-0.3,0) {$\frak{c}_l :$}; 

\draw (0.1,0) -- (0.5,0);
\draw [dotted] (0.5,0) -- (1,0);
\draw (1,0) -- (1.4,0); 
\draw (1.6,0) -- (2,0); 
\draw [dotted] (2,0) -- (2.5,0); 
\draw (2.5,0) -- (2.9,0); 
\draw (3.1,0) -- (3.2,0.1); 
\draw (3.1,0) -- (3.2,-0.1); 
\draw (3.1,0.025) -- (3.9,0.025); 
\draw (3.1,-0.025) -- (3.9,-0.025); 

\draw (6,0) circle [radius = 0.1]; 
\draw (7.5,0) circle [radius = 0.1]; 
\draw (8.5,0) circle [radius = 0.1]; 
\draw (9.5,0) circle [radius = 0.1]; 
\draw (11,0) circle [radius = 0.1]; 
\draw (12,0) circle [radius = 0.1]; 

\node [above] at (6.05,0.05) {$\phi_1$}; 
\node [above] at (7.75,0.05) {$\phi_{p-1}$}; 
\node [above] at (8.55,0.05) {$\epsilon$}; 
\node [above] at (9.75,0.05) {$\phi_{p+1}$}; 
\node [above] at (11.25,0.05) {$\phi_{l-1}$}; 
\node [above] at (12.05,0.05) {$\phi_l$}; 
\node [left] at (5.7,0) {$\frak{k} :$}; 

\draw (6.1,0) -- (6.5,0); 
\draw [dotted] (6.5,0) -- (7,0); 
\draw (7,0) -- (7.4,0); 
\draw (7.6,0) -- (7.7,0.1); 
\draw (7.6,0) -- (7.7,-0.1); 
\draw (7.6,0.025) -- (8.4,0.025); 
\draw (7.6,-0.025) -- (8.4,-0.025); 
\draw (9.6,0) -- (10,0); 
\draw [dotted] (10,0) -- (10.5,0); 
\draw (10.5,0) -- (10.9,0); 
\draw (11.1,0) -- (11.2,0.1); 
\draw (11.1,0) -- (11.2,-0.1); 
\draw (11.1,0.025) -- (11.9,0.025); 
\draw (11.1,-0.025) -- (11.9,-0.025); 

\end{tikzpicture} 

Here $P_\frak{k} = \{ \phi_i + \cdots +\phi_{j-1} : 1 \le i < j \le p\} \cup \{\phi_i + \cdots +\phi_{j-1} , \phi_i + \cdots +\phi_{j-1}+2\phi_j + \cdots +2\phi_{l-1}+\phi_l : p+1 \le i < j \le l \} 
\cup \{ 2\phi_i + \cdots +2\phi_{l-1}+\phi_l : 1 \le i \le l\} \cup \{\phi_i + \cdots +\phi_{j-1}+2\phi_j + \cdots +2\phi_{l-1}+\phi_l : 1 \le i < j \le p \},$ and $P \cap \Delta_n = 
\{\phi_i + \cdots +\phi_{j-1}, \phi_i + \cdots +\phi_{j-1}+2\phi_j + \cdots +2\phi_{l-1}+\phi_l : 1 \le i  \le p < j \le l \}.$ So $\epsilon = 
2\phi_p + \cdots +2\phi_{l-1}+\phi_l, \lambda = \phi_1 + \cdots +\phi_p+2\phi_{p+1} + \cdots +2\phi_{l-1}+\phi_l.$ We have $\langle \nu , \epsilon \rangle \neq 0.$
The only possibility of $\phi' \in \Phi_0$ with $n_{\phi'} (\delta)=1$ 
is $\phi'=\phi_l.$ 

(i) Let $\phi'=\phi_l.$ Then $\phi = - w_{\frak{l}_0}^0 (\phi') = \phi_l,$ and $\nu' = -(\phi_1+ \cdots + \phi_p +\phi_{p+1}+\cdots +\phi_{l-1}).$ So $\phi_l = \epsilon + 2\nu' + 
2\phi_1 + \cdots + 2\phi_{p-1}.$ 

3. $\frak{g}=\frak{\delta}_l (l \ge 4), \frak{g}_0 = \frak{so}(2p, 2l-2p), 2 \le p \le l-2:  \Phi = \{\phi_1, \phi_2, \ldots , \phi_l\}$ with $\nu = \phi_p, \delta = 
\phi_1+2\phi_2+\cdots + 2\phi_{l-2}+\phi_{l-1}+\phi_l.$   

\begin{tikzpicture}

\draw  (0,0) circle [radius = 0.1]; 
\draw (1,0) circle [radius = 0.1]; 
\filldraw[black] (2.5,0) circle [radius = 0.1]; 
\draw (4,0) circle [radius = 0.1]; 
\draw (5,0) circle [radius = 0.1]; 
\draw (4.9,-1) circle [radius = 0.1]; 
\node [above] at (0.05,0.05) {$\phi_1$}; 
\node [above] at (1.05,0.05) {$\phi_2$}; 
\node [above] at (2.55,0.05) {$\phi_p$}; 
\node [above] at (4.25,0.05) {$\phi_{l-2}$}; 
\node [above] at (5.25,0.05) {$\phi_{l-1}$}; 
\node [below] at (4.95,-1.05) {$\phi_l$}; 
\node [left] at (-0.3,0) {$\frak{\delta}_l :$}; 

\draw (0.1,0) -- (0.9,0); 
\draw (1.1,0) -- (1.5,0);
\draw [dotted] (1.5,0) -- (2,0);
\draw (2,0) -- (2.4,0); 
\draw (2.6,0) -- (3,0); 
\draw [dotted] (3,0) -- (3.5,0); 
\draw (3.5,0) -- (3.9,0); 
\draw (4.1,0) -- (4.9,0); 
\draw (4.1,0) -- (4.8,-0.95); 

\draw (7,0) circle [radius = 0.1]; 
\draw (8,0) circle [radius = 0.1]; 
\draw (9.5,0) circle [radius = 0.1]; 
\draw (10.5,0) circle [radius = 0.1]; 
\draw (10.4,-1) circle [radius = 0.1]; 
\draw (11.5,0) circle [radius = 0.1]; 
\draw (13,0) circle [radius = 0.1]; 
\draw (14,0) circle [radius = 0.1]; 
\draw (13.9,-1) circle [radius = 0.1]; 

\node [above] at (7.05,0.05) {$\phi_1$}; 
\node [above] at (8.05,0.05) {$\phi_2$}; 
\node [above] at (9.75,0.05) {$\phi_{p-2}$}; 
\node [above] at (10.75,0.05) {$\phi_{p-1}$}; 
\node [below] at (10.45,-1.05) {$\epsilon$}; 
\node [above] at (11.75,0.05) {$\phi_{p+1}$}; 
\node [above] at (13.25,0.05) {$\phi_{l-2}$}; 
\node [above] at (14.25,0.05) {$\phi_{l-1}$}; 
\node [below] at (13.95,-1.05) {$\phi_l$}; 
\node [left] at (6.7,0) {$\frak{k} :$}; 

\draw (7.1,0) -- (7.9,0); 
\draw (8.1,0) -- (8.5,0); 
\draw [dotted] (8.5,0) -- (9,0); 
\draw (9,0) -- (9.4,0); 
\draw (9.6,0) -- (10.4,0); 
\draw (9.6,0) -- (10.3,-0.95);
\draw (11.6,0) -- (12,0);
\draw [dotted] (12,0) -- (12.5,0); 
\draw (12.5,0) -- (12.9,0); 
\draw (13.1,0) -- (13.9,0); 
\draw (13.1,0) -- (13.8,-0.95); 

\end{tikzpicture} 

Here $P_\frak{k} = \{ \phi_i + \cdots +\phi_{j-1} : 1 \le i < j \le p\} \cup \{\phi_i + \cdots +\phi_{j-1}, 
(\phi_i + \cdots +\phi_{l-2})+(\phi_j + \cdots +\phi_l) : p+1 \le i < j \le l \} \cup 
\{(\phi_i + \cdots +\phi_{l-2})+(\phi_j + \cdots+\phi_l) : 1 \le i < j \le p \},$ and $P \cap \Delta_n = 
\{\phi_i + \cdots +\phi_{j-1}, (\phi_i + \cdots +\phi_{l-2})+(\phi_j + \cdots +\phi_l) : 1 \le i  \le p < j \le l \}.$ So $\epsilon = 
\phi_{p-1}+2\phi_p + \cdots +2\phi_{l-2}+\phi_{l-1}+\phi_l, \lambda = \phi_1 + \cdots +\phi_p+2\phi_{p+1} + \cdots +2\phi_{l-2}+\phi_{l-1}+\phi_l.$ 
We have $\langle \nu , \epsilon \rangle \neq 0$ and 
$\langle \lambda , \phi_1 \rangle \neq 0.$ The only possibilities of $\phi' \in \Phi_0$ with $n_{\phi'} (\delta)=1$ are $\phi'=\phi_1, \phi_{l-1},$ or $\phi_l.$ 

(i) Let $\phi'=\phi_1.$ Then $\phi = - w_{\frak{l}_0}^0 (\phi') = \phi_{p-1},$ and 
$\nu' = -(\phi_p +2\phi_{p+1}+\cdots +2\phi_{l-2}+\phi_{l-1}+\phi_l).$ 
So $\phi_{p-1} = \epsilon + 2\nu' + 2\phi_{p+1} + \cdots + 2\phi_{l-2}+\phi_{l-1}+\phi_l.$ 

If $\phi'=\phi_{l-1},$ or $\phi_l,$ then $\phi = - w_{\frak{l}_0}^0 (\phi') = \phi_{l-1} \textrm{ or } \phi_l.$

(ii) Let $\phi = \phi_l.$ Then $\nu' = -(\phi_1+\cdots+\phi_p +\phi_{p+1}+\cdots +\phi_{l-2}+\phi_{l-1}).$ 
So $\phi_l = \epsilon + 2\nu' + 2\phi_1+\cdots+2\phi_{p-2}+\phi_{p-1}+\phi_{l-1}.$ 

(iii) Let $\phi = \phi_{l-1}.$ Then $\nu' = -(\phi_1+\cdots +\phi_p +\phi_{p+1}+\cdots +\phi_{l-2}+\phi_l).$ 
So $\phi_{l-1} = \epsilon + 2\nu' + 2\phi_1+\cdots+2\phi_{p-2}+\phi_{p-1}+\phi_l.$ 

4. $\frak{g}=\frak{e}_6, \frak{g}_0 = \frak{e}_{6(2)}:  \Phi = \{\phi_1,\phi_2,\phi_3,\phi_4,\phi_5,\phi_6\}$ with $\nu = \phi_2, \delta= 
\phi_1+2\phi_2+2\phi_3+3\phi_4+2\phi_5+\phi_6.$  

\begin{tikzpicture}

\draw  (0,0) circle [radius = 0.1]; 
\draw (1,0) circle [radius = 0.1]; 
\draw (2,0) circle [radius = 0.1]; 
\filldraw[black] (2,1) circle [radius = 0.1]; 
\draw (3,0) circle [radius = 0.1]; 
\draw (4,0) circle [radius = 0.1]; 
\node [below] at (0.05,0.05) {$\phi_6$}; 
\node [below] at (1.05,0.05) {$\phi_5$}; 
\node [below] at (2.05,0.05) {$\phi_4$}; 
\node [above] at (2.05,1.05) {$\phi_2$}; 
\node [below] at (3.05,0.05) {$\phi_3$}; 
\node [below] at (4.05,0.05) {$\phi_1$}; 
\node [left] at (-0.3,0) {$\frak{e}_6 :$}; 

\draw (0.1,0) -- (0.9,0); 
\draw (1.1,0) -- (1.9,0);
\draw (2.1,0) -- (2.9,0); 
\draw (2,0.1) -- (2,0.9); 
\draw (3.1,0) -- (3.9,0); 

\draw (6,0) circle [radius = 0.1]; 
\draw (7,0) circle [radius = 0.1]; 
\draw (8,0) circle [radius = 0.1]; 
\draw (9,0) circle [radius = 0.1]; 
\draw (10,0) circle [radius = 0.1]; 
\draw (11,0) circle [radius = 0.1]; 

\node [below] at (6,-0.05) {$\epsilon$}; 
\node [below] at (7.05,0.05) {$\phi_6$}; 
\node [below] at (8.05,0.05) {$\phi_5$}; 
\node [below] at (9.05,0.05) {$\phi_4$}; 
\node [below] at (10.05,0.05) {$\phi_3$}; 
\node [below] at (11.05,0.05) {$\phi_1$}; 
\node [left] at (5.7,0) {$\frak{k} :$}; 

\draw (7.1,0) -- (7.9,0); 
\draw (8.1,0) -- (8.9,0); 
\draw (9.1,0) -- (9.9,0); 
\draw (10.1,0) -- (10.9,0); 

\end{tikzpicture} 

Here $P_\frak{k} = \{\phi_1, \phi_1 + \phi_3 +\cdots \phi_{j-1}, \phi_i + \cdots +\phi_{j-1} : 3 \le i < j \le 7\} \cup \{\delta\},$ and 
$P \cap \Delta_n = \{ \phi_2, \phi_2+\phi_4+\cdots +\phi_i,  \phi_2+\phi_3+\phi_4+\cdots +\phi_i, \phi_1+\phi_2+\phi_3+\phi_4+\cdots +\phi_i : 4 \le i \le 6\} \cup 
\{\phi_2+\phi_3+2\phi_4+\phi_5+\cdots +\phi_i, \phi_1+\phi_2+\phi_3+2\phi_4+\phi_5+\cdots +\phi_i, \phi_1+\phi_2+2\phi_3+2\phi_4+\phi_5+\cdots +\phi_i : 5 \le i \le 6\} 
\cup \{\phi_2+\phi_3+2\phi_4+2\phi_5+\phi_6, \phi_1+\phi_2+\phi_3+2\phi_4+2\phi_5+\phi_6, \phi_1+\phi_2+2\phi_3+2\phi_4+2\phi_5+\phi_6, 
\phi_1+\phi_2+2\phi_3+3\phi_4+2\phi_5+\phi_6\}.$ So $\epsilon = \delta, \lambda = \phi_1 + \phi_2 +2\phi_3++3\phi_4+2\phi_5 +\phi_6.$ 
We have $\langle \nu , \epsilon \rangle \neq 0.$
The only possibilities of $\phi' \in \Phi_0$ with $n_{\phi'} (\delta)=1$ are $\phi'=\phi_1,$ or $\phi_6.$ 

(i) Let $\phi'=\phi_1.$ Then $\phi = - w_{\frak{l}_0}^0 (\phi') = \phi_6,$ and $\nu' = -(\phi_1+\phi_2+2\phi_3+2\phi_4+\phi_5).$ 
So $\phi_6 = \epsilon + 2\nu' +\phi_1+2\phi_3+\phi_4.$ 

(ii) Let $\phi' = \phi_6.$ Then $\phi = - w_{\frak{l}_0}^0 (\phi') = \phi_1,$ and $\nu' = -(\phi_2+\phi_3+2\phi_4+2\phi_5+\phi_6).$ 
So $\phi_1 = \epsilon + 2\nu' + \phi_4+2\phi_5+\phi_6.$ 

5. $\frak{g}=\frak{e}_7, \frak{g}_0 = \frak{e}_{7(-5)}:  \Phi = \{\phi_1,\phi_2,\phi_3,\phi_4,\phi_5,\phi_6,\phi_7\}$ with $\nu = \phi_1, \delta= 
2\phi_1+2\phi_2+3\phi_3+4\phi_4+3\phi_5+2\phi_6+\phi_7.$   

\begin{tikzpicture}

\draw  (0,0) circle [radius = 0.1]; 
\draw (1,0) circle [radius = 0.1]; 
\draw (2,0) circle [radius = 0.1]; 
\draw (3,0) circle [radius = 0.1]; 
\draw (3,1) circle [radius = 0.1]; 
\draw (4,0) circle [radius = 0.1]; 
\filldraw[black] (5,0) circle [radius = 0.1]; 
\node [below] at (0.05,0.05) {$\phi_7$}; 
\node [below] at (1.05,0.05) {$\phi_6$}; 
\node [below] at (2.05,0.05) {$\phi_5$}; 
\node [below] at (3.05,0.05) {$\phi_4$}; 
\node [above] at (3.05,1.05) {$\phi_2$}; 
\node [below] at (4.05,0.05) {$\phi_3$}; 
\node [below] at (5.05,0.05) {$\phi_1$}; 
\node [left] at (-0.3,0) {$\frak{e}_7 :$}; 

\draw (0.1,0) -- (0.9,0); 
\draw (1.1,0) -- (1.9,0);
\draw (2.1,0) -- (2.9,0); 
\draw (3,0.1) -- (3,0.9); 
\draw (3.1,0) -- (3.9,0); 
\draw (4.1,0) -- (4.9,0); 

\draw (7,0) circle [radius = 0.1]; 
\draw (8,0) circle [radius = 0.1]; 
\draw (9,0) circle [radius = 0.1]; 
\draw (10,0) circle [radius = 0.1]; 
\draw (10,1) circle [radius = 0.1]; 
\draw (11,0) circle [radius = 0.1]; 
\draw (12,0) circle [radius = 0.1]; 

\node [below] at (7.05,0.05) {$\phi_7$}; 
\node [below] at (8.05,0.05) {$\phi_6$}; 
\node [below] at (9.05,0.05) {$\phi_5$}; 
\node [below] at (10.05,0.05) {$\phi_4$}; 
\node [above] at (10.05,1.05) {$\phi_2$}; 
\node [below] at (11.05,0.05) {$\phi_3$}; 
\node [below] at (12,-0.05) {$\epsilon$}; 
\node [left] at (6.7,0) {$\frak{k} :$}; 

\draw (7.1,0) -- (7.9,0); 
\draw (8.1,0) -- (8.9,0); 
\draw (9.1,0) -- (9.9,0); 
\draw (10.1,0) -- (10.9,0); 
\draw (10,0.1) -- (10,0.9); 

\end{tikzpicture} 

Here $P_\frak{k} = \{ \phi_{i+1} + \cdots +\phi_j, (\phi_2+\cdots+\phi_i)+ (\phi_4+\phi_5+\cdots +\phi_j) : 2 \le i < j \le 7\} \cup \{\delta\},$ and 
$P \cap \Delta_n = \{ \phi_1, \phi_1+\phi_3+\cdots +\phi_i : 3 \le i \le 7\} \cup \{\phi_1+\phi_2+\phi_3+\phi_4+\cdots+\phi_i : 4 \le i \le 7\}\cup 
\{\phi_1+\phi_2+\phi_3+2\phi_4+\phi_5+\cdots +\phi_i , \phi_1+ \phi_2+2\phi_3+2\phi_4+\phi_5+ \cdots + \phi_i : 5 \le i \le 7\} \cup 
\{\phi_1+\phi_2+\phi_3+2\phi_4+2\phi_5+\phi_6+\cdots +\phi_i , \phi_1+\phi_2+2\phi_3+2\phi_4+2\phi_5+\phi_6+\cdots +\phi_i , 
\phi_1+\phi_2+2\phi_3+3\phi_4+2\phi_5+\phi_6+\cdots +\phi_i , \phi_1+2\phi_2+2\phi_3+3\phi_4+2\phi_5+\phi_6+\cdots +\phi_i : 6 \le i \le 7\} \cup 
\{\phi_1+\phi_2+\phi_3+2\phi_4+2\phi_5+2\phi_6+\phi_7, \phi_1+\phi_2+2\phi_3+2\phi_4+2\phi_5+2\phi_6+\phi_7,
\phi_1+\phi_2+2\phi_3+3\phi_4+2\phi_5+2\phi_6+\phi_7, \phi_1+\phi_2+2\phi_3+3\phi_4+3\phi_5+2\phi_6+\phi_7, 
\phi_1+2\phi_2+2\phi_3+3\phi_4+2\phi_5+2\phi_6+\phi_7, \phi_1+2\phi_2+2\phi_3+3\phi_4+3\phi_5+2\phi_6+\phi_7, 
\phi_1+2\phi_2+2\phi_3+4\phi_4+3\phi_5+2\phi_6+\phi_7, \phi_1+2\phi_2+3\phi_3+4\phi_4+3\phi_5+2\phi_6+\phi_7\}.$ So $\epsilon = \delta, 
\lambda = \phi_1+2\phi_2+3\phi_3+4\phi_4+3\phi_5+2\phi_6+\phi_7.$ We have $\langle \nu , \epsilon \rangle \neq 0.$ 
The only possibility of $\phi' \in \Phi_0$ with $n_{\phi'} (\delta)=1$ is $\phi'=\phi_7.$ 

(i) Let $\phi'=\phi_7.$ Then $\phi = - w_{\frak{l}_0}^0 (\phi') = \phi_7,$ and $\nu' = -(\phi_1+2\phi_2+2\phi_3+3\phi_4+2\phi_5+\phi_6).$ 
So $\phi_7 = \epsilon + 2\nu' +2\phi_2+\phi_3+2\phi_4+\phi_5.$ 

6. $\frak{g}=\frak{e}_7, \frak{g}_0 = \frak{e}_{7(7)}:  \Phi = \{\phi_1,\phi_2,\phi_3,\phi_4,\phi_5,\phi_6,\phi_7\}$ with $\nu = \phi_2, \delta = 
2\phi_1+2\phi_2+3\phi_3+4\phi_4+3\phi_5+2\phi_6+\phi_7.$   

\begin{tikzpicture}

\draw  (0,0) circle [radius = 0.1]; 
\draw (1,0) circle [radius = 0.1]; 
\draw (2,0) circle [radius = 0.1]; 
\draw (3,0) circle [radius = 0.1]; 
\filldraw [black] (3,1) circle [radius = 0.1]; 
\draw (4,0) circle [radius = 0.1]; 
\draw (5,0) circle [radius = 0.1]; 
\node [below] at (0.05,0.05) {$\phi_7$}; 
\node [below] at (1.05,0.05) {$\phi_6$}; 
\node [below] at (2.05,0.05) {$\phi_5$}; 
\node [below] at (3.05,0.05) {$\phi_4$}; 
\node [above] at (3.05,1.05) {$\phi_2$}; 
\node [below] at (4.05,0.05) {$\phi_3$}; 
\node [below] at (5.05,0.05) {$\phi_1$}; 
\node [left] at (-0.3,0) {$\frak{e}_7 :$}; 

\draw (0.1,0) -- (0.9,0); 
\draw (1.1,0) -- (1.9,0);
\draw (2.1,0) -- (2.9,0); 
\draw (3,0.1) -- (3,0.9); 
\draw (3.1,0) -- (3.9,0); 
\draw (4.1,0) -- (4.9,0); 

\draw (7,0) circle [radius = 0.1]; 
\draw (8,0) circle [radius = 0.1]; 
\draw (9,0) circle [radius = 0.1]; 
\draw (10,0) circle [radius = 0.1]; 
\draw (11,0) circle [radius = 0.1]; 
\draw (12,0) circle [radius = 0.1]; 
\draw (13,0) circle [radius = 0.1]; 

\node [below] at (7,-0.05) {$\epsilon$}; 
\node [below] at (8.05,0.05) {$\phi_7$}; 
\node [below] at (9.05,0.05) {$\phi_6$}; 
\node [below] at (10.05,0.05) {$\phi_5$}; 
\node [below] at (11.05,0.05) {$\phi_4$}; 
\node [below] at (12.05,0.05) {$\phi_3$}; 
\node [below] at (13.05,0.05) {$\phi_1$}; 
\node [left] at (6.7,0) {$\frak{k} :$}; 

\draw (7.1,0) -- (7.9,0); 
\draw (8.1,0) -- (8.9,0); 
\draw (9.1,0) -- (9.9,0); 
\draw (10.1,0) -- (10.9,0); 
\draw (11.1,0) -- (11.9,0); 
\draw (12.1,0) -- (12.9,0); 
\end{tikzpicture} 

Here $P_\frak{k} = \{ \phi_1, \phi_1+\phi_3+\cdots + \phi_{j-1}, \phi_i + \cdots + \phi_{j-1} : 3 \le i < j \le 8\} \cup 
\{\phi_i + \cdots + \phi_7 + (\phi_1+2\phi_2+2\phi_3+3\phi_4+2\phi_5+\phi_6), \phi_1+\phi_3+\cdots + \phi_7 + (\phi_1+2\phi_2+2\phi_3+3\phi_4+2\phi_5+\phi_6) : 3 \le i \le 7\},
$ and $P \cap \Delta_n = \{\phi_2, \phi_2+\phi_4+\cdots + \phi_i, \phi_2+\phi_3+\phi_4+\cdots +\phi_i, \phi_1+\phi_2+\phi_3+\phi_4+\cdots +\phi_i : 4 \le i \le 7\} \cup 
\{\phi_2+\phi_3+2\phi_4+\phi_5+\cdots +\phi_i, \phi_1+\phi_2+\phi_3+2\phi_4+\phi_5+\cdots +\phi_i, \phi_1+\phi_2+2\phi_3+2\phi_4+\phi_5+\cdots +\phi_i : 5 \le i \le 7\} \cup 
\{\phi_2+\phi_3+2\phi_4+2\phi_5+\phi_6+\cdots +\phi_i, \phi_1+\phi_2+\phi_3+2\phi_4+2\phi_5+\phi_6+\cdots +\phi_i, \phi_1+\phi_2+2\phi_3+2\phi_4+2\phi_5+\phi_6+
\cdots +\phi_i, \phi_1+\phi_2+2\phi_3+3\phi_4+2\phi_5+\phi_6+\cdots +\phi_i : 6\le i \le 7 \} \cup \{\phi_2+\phi_3+2\phi_4+2\phi_5+2\phi_6+\phi_7, 
\phi_1+\phi_2+\phi_3+2\phi_4+2\phi_5+2\phi_6+\phi_7, \phi_1+\phi_2+2\phi_3+2\phi_4+2\phi_5+2\phi_6+\phi_7, \phi_1+\phi_2+2\phi_3+3\phi_4+2\phi_5+2\phi_6+\phi_7, 
\phi_1+\phi_2+2\phi_3+3\phi_4+3\phi_5+2\phi_6+\phi_7\}.$ 
So $\epsilon = \phi_1+2\phi_2+2\phi_3+3\phi_4+2\phi_5
+\phi_6, \lambda = \phi_1+\phi_2+2\phi_3+3\phi_4+3\phi_5+2\phi_6+\phi_7.$ We have $\langle \nu , \epsilon \rangle \neq 0$ and 
$\langle \lambda , \phi_5 \rangle \neq 0.$ The only possibility of $\phi' \in \Phi_0$ with $n_{\phi'} (\delta)=1$ is $\phi'=\phi_7.$ 

(i) Let $\phi'=\phi_7.$ Then $\phi = - w_{\frak{l}_0}^0 (\phi') = \phi_1,$ and $\nu' = -(\phi_2+\phi_3+2\phi_4+2\phi_5+2\phi_6+\phi_7).$ 
So $\phi_1 = \epsilon + 2\nu' +\phi_4+2\phi_5+3\phi_6+2\phi_7.$ 

If $\frak{g}=\frak{e}_8, \frak{f}_4,$ or $\frak{g}_2;$ there are no $\phi' \in \Phi$ with $n_{\phi'}(\delta)=1.$ 

\noindent
{\bf Proof of Corollary \ref{cor}:}
Let $A$ be the number of Borel-de Siebenthal positive root systems of $\Delta$ containing $P_\frak{k}.$ 
Then from Theorem \ref{main}, we have $A = 1 + B,$ where $B$ is the number of simple roots in $P$ whose coefficient in $\delta,$ 
when expressed as the sum of simple roots, is $1.$ Thus 
\[ A= 
\begin{cases}
1  & \textrm{if } \frak{g} = \frak{e}_8, \frak{f}_4, \frak{g}_2, \\
2  & \textrm{if } \frak{g} = \frak{b}_l(l \ge 2), \frak{c}_l (l \ge 2), \frak{e}_7, \\
3  & \textrm{if } \frak{g} = \frak{e}_6, \\
4  & \textrm{if } \frak{g} = \frak{\delta}_l (l \ge 4); \\
\end{cases}
\] 
which is equal to the covering index of $Int(\frak{u}),$ where $\frak{u}$ is the compact real form of $\frak{g}$ \cite[Th. 3.32, Ch. X]{helgason}. Since the covering index of 
$Int(\frak{g})=$ the covering index of $Int(\frak{u}),$ the proof is complete. 

\begin{remark}\label{rk}
(i) Let $\frak{k}$ be semisimple, $\phi' \in \Phi_0$ be such that $n_{\phi'}(\delta)=1,$ 
$\phi \in \Phi_0$ be such that $w_{\frak{l}_0}^0 (\phi') = -\phi,$ and $P'$ be the Borel-de Siebenthal positive root system of 
$\Delta$ whose simple roots are given by $(\Phi_\frak{k} \setminus \{\phi \}) \cup \{\nu'\},$ where $\nu'$ be the lowest weight of the irreducible $\frak{l}'_0$-submodule 
of $\frak{p}$ with highest weight $\lambda,$ and $\frak{l}'_0$ be the reductive subalgebra of $\frak{k}$ containing $\frak{h}$ and the Dynkin diagram of $[\frak{l}'_0, \frak{l}'_0]$ be 
the subdiagram of the Dynkin diagram of $\frak{k}$ with vertices $\Phi_\frak{k} \setminus \{\phi\}.$ 
Then $P' \cap \Delta_n = \{\beta \in \Delta_n : n_\phi(\beta) = 1 \} \cup \{\beta \in (-P) \cap \Delta_n : n_\phi(\beta) = 0 \}.$ 

(ii) If $\frak{k}$ has non-zero centre, then obviously there are exactly two Borel-de Siebenthal positive root systems of $\Delta$ containing $P_\frak{k} = P_0.$ The corresponding 
sets of simple roots are given by $\Phi_0 \cup \{\nu\},$ and $\Phi_0 \cup \{-\delta\}.$ 
\end{remark}

The $\frak{l}'_0$-submodule of $\frak{p}$ with highest weight $\lambda$ and Remark \ref{rk}(i) are illustrated with the following example. 

{\bf Example :} Let $\frak{g}_0 = \frak{so}(2p, 2l-2p+1), 2 \le p \le l.$ Assume that $\Phi = \{\phi_1, \phi_2, \ldots , \phi_l\}$ with $\nu = \phi_p,$ and $\delta = 
\phi_1+2\phi_2 + \cdots +2\phi_l.$ Recall that $\epsilon = \phi_{p-1}+2\phi_p + \cdots +2\phi_l,$ and $\lambda = \phi_1 + \cdots +\phi_p+2\phi_{p+1} + \cdots +2\phi_l.$ 
The only possibility of $\phi' \in \Phi_0$ with $n_{\phi'} (\delta)=1$ is $\phi'=\phi_1.$ Then $\phi = - w_{\frak{l}_0}^0 (\phi') = \phi_{p-1},$ and 
$\nu' = -(\phi_p+2\phi_{p+1} + \cdots +2\phi_l).$ The simple roots of the reductive 
subalgebra $\frak{l}'_0$ of $\frak{k}$ are $\phi_1,\ldots , \phi_{p-2}, \epsilon, \phi_{p+1},\ldots , \phi_l.$ The weights of the irreducible $\frak{k}$-module $\frak{p}$ are the 
vertices of the diagram in the Figure \ref{diagram}. 

\begin{figure}[!h]
\begin{center} 
\begin{tikzpicture}

\filldraw [black] (0,0) circle [radius = 0.1]; 

\filldraw [black] (-1,-1) circle [radius = 0.1]; 
\filldraw [black] (1,-1) circle [radius = 0.1]; 
\filldraw [black] (-2.5,-2.5) circle [radius = 0.1]; 
\filldraw [black] (3,-3) circle [radius = 0.1]; 
\filldraw [black] (-3.5,-3.5) circle [radius = 0.1]; 
\filldraw [black] (4,-4) circle [radius = 0.1]; 
\filldraw [black] (5,-5) circle [radius = 0.1]; 
\filldraw [black] (7,-7) circle [radius = 0.1];
\filldraw [black] (8,-8) circle [radius = 0.1];

\filldraw [black] (0,-2) circle [radius = 0.1]; 
\filldraw [black] (-1.5,-3.5) circle [radius = 0.1]; 
\filldraw [black] (-2.5,-4.5) circle [radius = 0.1]; 

\filldraw [black] (2,-4) circle [radius = 0.1]; 
\filldraw [black] (3,-5) circle [radius = 0.1]; 
\filldraw [black] (4,-6) circle [radius = 0.1]; 

\filldraw [black] (0.5,-5.5) circle [radius = 0.1]; 
\filldraw [black] (-0.5,-6.5) circle [radius = 0.1]; 
\filldraw [black] (1.5,-6.5) circle [radius = 0.1]; 
\filldraw [black] (0.5,-7.5) circle [radius = 0.1]; 
\filldraw [black] (2.5,-7.5) circle [radius = 0.1]; 
\filldraw [black] (1.5,-8.5) circle [radius = 0.1]; 

\filldraw [black] (6,-8) circle [radius = 0.1];
\filldraw [black] (7,-9) circle [radius = 0.1];

\filldraw [black] (4.5,-9.5) circle [radius = 0.1];
\filldraw [black] (3.5,-10.5) circle [radius = 0.1];
\filldraw [black] (5.5,-10.5) circle [radius = 0.1];
\filldraw [black] (4.5,-11.5) circle [radius = 0.1];

\filldraw [black] (-5,-3.5) circle [radius = 0.1]; 

\filldraw [black] (-6,-4.5) circle [radius = 0.1]; 
\filldraw [black] (-4,-4.5) circle [radius = 0.1]; 
\filldraw [black] (-7.5,-6) circle [radius = 0.1]; 
\filldraw [black] (-2,-6.5) circle [radius = 0.1]; 
\filldraw [black] (-8.5,-7) circle [radius = 0.1]; 
\filldraw [black] (-1,-7.5) circle [radius = 0.1]; 
\filldraw [black] (0,-8.5) circle [radius = 0.1]; 
\filldraw [black] (2,-10.5) circle [radius = 0.1];
\filldraw [black] (3,-11.5) circle [radius = 0.1];

\filldraw [black] (-5,-5.5) circle [radius = 0.1]; 
\filldraw [black] (-6.5,-7) circle [radius = 0.1]; 
\filldraw [black] (-7.5,-8) circle [radius = 0.1]; 

\filldraw [black] (-3,-7.5) circle [radius = 0.1]; 
\filldraw [black] (-2,-8.5) circle [radius = 0.1]; 
\filldraw [black] (-1,-9.5) circle [radius = 0.1]; 

\filldraw [black] (-4.5,-9) circle [radius = 0.1]; 
\filldraw [black] (-5.5,-10) circle [radius = 0.1]; 
\filldraw [black] (-3.5,-10) circle [radius = 0.1]; 
\filldraw [black] (-4.5,-11) circle [radius = 0.1]; 
\filldraw [black] (-2.5,-11) circle [radius = 0.1]; 
\filldraw [black] (-3.5,-12) circle [radius = 0.1]; 

\filldraw [black] (1,-11.5) circle [radius = 0.1];
\filldraw [black] (2,-12.5) circle [radius = 0.1];

\filldraw [black] (-0.5,-13) circle [radius = 0.1];
\filldraw [black] (-1.5,-14) circle [radius = 0.1];
\filldraw [black] (0.5,-14) circle [radius = 0.1];
\filldraw [black] (-0.5,-15) circle [radius = 0.1];

\node [above] at (0,0) {$\lambda$}; 
\node [below] at (4.5,-11.5) {$\nu$}; 
\node [above] at (-5,-3.5) {$-\nu$}; 
\node [below] at (-0.5,-15) {$-\lambda$}; 
\node [below] at (3,-11.5) {$\nu'$}; 

\node [left] at (-0.3,-0.3) {$\phi_1$};
\node [right] at (0.4,-1.6) {$\phi_1$};
\node [left] at (2.7,-3.3) {$\phi_1$};
\node [right] at (3.4,-4.6) {$\phi_1$};
\node [right] at (4.4,-5.6) {$\phi_1$};
\node [left] at (6.7,-7.3) {$\phi_1$};
\node [right] at (7.4,-8.6) {$\phi_1$};

\node [left] at (-2.8,-2.8) {$\phi_{p-1}$};
\node [right] at (-2.1,-4.1) {$\phi_{p-1}$};
\node [left] at (0.2,-5.8) {$\phi_{p-1}$};
\node [right] at (0.9,-7.1) {$\phi_{p-1}$};
\node [right] at (1.9,-8.1) {$\phi_{p-1}$};
\node [left] at (4.2,-9.8) {$\phi_{p-1}$};
\node [right] at (4.9,-11.1) {$\phi_{p-1}$}; 

\node [right] at (0.3,-0.3) {$\phi_{p+1}$};
\node [right] at (-0.7,-1.3) {$\phi_{p+1}$};
\node [right] at (-2.2,-2.8) {$\phi_{p+1}$};
\node [right] at (-3.2,-3.8) {$\phi_{p+1}$};

\node [right] at (3.3,-3.3) {$\phi_l$};
\node [right] at (2.3,-4.3) {$\phi_l$};
\node [right] at (0.8,-5.8) {$\phi_l$};
\node [right] at (-0.2,-6.8) {$\phi_l$};

\node [right] at (4.3,-4.3) {$\phi_l$};
\node [right] at (3.3,-5.3) {$\phi_l$};
\node [right] at (1.8,-6.8) {$\phi_l$};
\node [right] at (0.8,-7.8) {$\phi_l$};

\node [right] at (7.3,-7.3) {$\phi_{p+1}$};
\node [right] at (6.3,-8.3) {$\phi_{p+1}$};
\node [right] at (4.8,-9.8) {$\phi_{p+1}$};
\node [right] at (3.8,-10.8) {$\phi_{p+1}$};

\node [above] at (-3,-2.7) {$\epsilon$};
\node [above] at (-2,-3.7) {$\epsilon$};
\node [above] at (0,-5.7) {$\epsilon$};
\node [above] at (1,-6.7) {$\epsilon$};
\node [above] at (2,-7.7) {$\epsilon$};
\node [above] at (4,-9.7) {$\epsilon$};
\node [above] at (5,-10.7) {$\epsilon$};

\node [above] at (-4,-3.7) {$\epsilon$};
\node [above] at (-3,-4.7) {$\epsilon$};
\node [above] at (-1,-6.7) {$\epsilon$};
\node [above] at (0,-7.7) {$\epsilon$};
\node [above] at (1,-8.7) {$\epsilon$};
\node [above] at (3,-10.7) {$\epsilon$};
\node [above] at (4,-11.7) {$\epsilon$};

\node [left] at (-5.3,-3.8) {$\phi_{p-1}$};
\node [right] at (-4.6,-5.1) {$\phi_{p-1}$};
\node [left] at (-2.3,-6.8) {$\phi_{p-1}$};
\node [right] at (-1.6,-8.1) {$\phi_{p-1}$};
\node [right] at (-0.6,-9.1) {$\phi_{p-1}$};
\node [left] at (1.7,-10.8) {$\phi_{p-1}$};
\node [right] at (2.4,-12.1) {$\phi_{p-1}$};

\node [left] at (-7.8,-6.3) {$\phi_1$};
\node [right] at (-7.1,-7.6) {$\phi_1$};
\node [left] at (-4.8,-9.3) {$\phi_1$};
\node [right] at (-4.1,-10.6) {$\phi_1$};
\node [right] at (-3.1,-11.6) {$\phi_1$};
\node [left] at (-0.8,-13.3) {$\phi_1$};
\node [right] at (-0.1,-14.6) {$\phi_1$}; 

\node [right] at (-4.7,-3.8) {$\phi_{p+1}$};
\node [right] at (-5.7,-4.8) {$\phi_{p+1}$};
\node [right] at (-7.2,-6.3) {$\phi_{p+1}$};
\node [right] at (-8.2,-7.3) {$\phi_{p+1}$};

\node [right] at (-1.7,-6.8) {$\phi_l$};
\node [right] at (-2.7,-7.8) {$\phi_l$};
\node [right] at (-4.2,-9.3) {$\phi_l$};
\node [right] at (-5.2,-10.3) {$\phi_l$};

\node [right] at (-0.7,-7.8) {$\phi_l$};
\node [right] at (-1.7,-8.8) {$\phi_l$};
\node [right] at (-3.2,-10.3) {$\phi_l$};
\node [right] at (-4.2,-11.3) {$\phi_l$};

\node [right] at (2.3,-10.8) {$\phi_{p+1}$};
\node [right] at (1.3,-11.8) {$\phi_{p+1}$};
\node [right] at (-0.2,-13.3) {$\phi_{p+1}$};
\node [right] at (-1.2,-14.3) {$\phi_{p+1}$};

\draw (0,-0.1) -- (-0.95,-0.9); 
\draw (0,-0.1) -- (0.95,-0.9);
\draw [dotted] (-1.1,-1.1) -- (-2.4,-2.4);
\draw [dotted](1.1,-1.1) -- (2.9,-2.9);
\draw (-2.55,-2.6) -- (-3.45,-3.4);
\draw (3.05,-3.1) -- (3.95,-3.9);
\draw (4.05,-4.1) -- (4.95,-4.9);
\draw [dotted] (5.05,-5.1) -- (6.95,-6.9);
\draw (7.05,-7.1) -- (7.95,-7.9);

\draw (-1,-1.1) -- (-0.05,-1.9);
\draw [dotted](0.1,-2.1) -- (1.9,-3.9);
\draw (2.05,-4.1) -- (2.95,-4.9);
\draw (3.05,-5.1) -- (3.95,-5.9);
\draw [dotted] (4.05,-6.1) -- (5.95,-7.9);
\draw (6.05,-8.1) -- (6.95,-8.9);

\draw (-2.5,-2.6) -- (-1.55,-3.4);
\draw [dotted](-1.4,-3.6) -- (0.4,-5.4);
\draw (0.55,-5.6) -- (1.45,-6.4);
\draw (1.55,-6.6) -- (2.45,-7.4);
\draw [dotted] (2.55,-7.6) -- (4.45,-9.4);
\draw (4.55,-9.6) -- (5.45,-10.4);

\draw (-3.5,-3.6) -- (-2.55,-4.4);
\draw [dotted](-2.4,-4.6) -- (-0.6,-6.4);
\draw (-0.45,-6.6) -- (0.45,-7.4);
\draw (0.55,-7.6) -- (1.45,-8.4);
\draw [dotted] (1.55,-8.6) -- (3.45,-10.4);
\draw (3.55,-10.6) -- (4.45,-11.4);

\draw (1,-1.1) -- (0.05,-1.9); 
\draw [dotted] (-0.1,-2.1) -- (-1.4,-3.4);
\draw (-1.55,-3.6) -- (-2.45,-4.4);

\draw (3,-3.1) -- (2.05,-3.9); 
\draw [dotted] (1.9,-4.1) -- (0.6,-5.4);
\draw (0.45,-5.6) -- (-0.45,-6.4);

\draw (4,-4.1) -- (3.05,-4.9); 
\draw [dotted] (2.9,-5.1) -- (1.6,-6.4);
\draw (1.45,-6.6) -- (0.55,-7.4);

\draw (5,-5.1) -- (4.05,-5.9); 
\draw [dotted] (3.9,-6.1) -- (2.6,-7.4);
\draw (2.45,-7.6) -- (1.55,-8.4);

\draw (7,-7.1) -- (6.05,-7.9); 
\draw [dotted] (5.9,-8.1) -- (4.6,-9.4);
\draw (4.45,-9.6) -- (3.55,-10.4);

\draw (8,-8.1) -- (7.05,-8.9); 
\draw [dotted] (6.9,-9.1) -- (5.6,-10.4);
\draw (5.45,-10.6) -- (4.55,-11.4);

\draw (-5,-3.6) -- (-5.95,-4.4); 
\draw (-5,-3.6) -- (-4.05,-4.4);
\draw [dotted] (-6.1,-4.6) -- (-7.4,-5.9);
\draw [dotted](-3.9,-4.6) -- (-2.1,-6.4);
\draw (-7.55,-6.1) -- (-8.45,-6.9);
\draw (-1.95,-6.6) -- (-1.05,-7.4);
\draw (-0.95,-7.6) -- (-0.05,-8.4);
\draw [dotted] (0.05,-8.6) -- (1.95,-10.4);
\draw (2.05,-10.6) -- (2.95,-11.4);

\draw (-6,-4.6) -- (-5.05,-5.4);
\draw [dotted](-4.9,-5.6) -- (-3.1,-7.4);
\draw (-2.95,-7.6) -- (-2.05,-8.4);
\draw (-1.95,-8.6) -- (-1.05,-9.4);
\draw [dotted] (-0.95,-9.6) -- (0.95,-11.4);
\draw (1.05,-11.6) -- (1.95,-12.4);

\draw (-7.5,-6.1) -- (-6.55,-6.9);
\draw [dotted](-6.4,-7.1) -- (-4.6,-8.9);
\draw (-4.45,-9.1) -- (-3.55,-9.9);
\draw (-3.45,-10.1) -- (-2.55,-10.9);
\draw [dotted] (-2.45,-11.1) -- (-0.55,-12.9);
\draw (-0.45,-13.1) -- (0.45,-13.9);

\draw (-8.5,-7.1) -- (-7.55,-7.9);
\draw [dotted](-7.4,-8.1) -- (-5.6,-9.9);
\draw (-5.45,-10.1) -- (-4.55,-10.9);
\draw (-4.45,-11.1) -- (-3.55,-11.9);
\draw [dotted] (-3.45,-12.1) -- (-1.55,-13.9);
\draw (-1.45,-14.1) -- (-0.55,-14.9);

\draw (-4,-4.6) -- (-4.95,-5.4); 
\draw [dotted] (-5.1,-5.6) -- (-6.4,-6.9);
\draw (-6.55,-7.1) -- (-7.45,-7.9);

\draw (-2,-6.6) -- (-2.95,-7.4); 
\draw [dotted] (-3.1,-7.6) -- (-4.4,-8.9);
\draw (-4.55,-9.1) -- (-5.45,-9.9);

\draw (-1,-7.6) -- (-1.95,-8.4); 
\draw [dotted] (-2.1,-8.6) -- (-3.4,-9.9);
\draw (-3.55,-10.1) -- (-4.45,-10.9);

\draw (0,-8.6) -- (-0.95,-9.4); 
\draw [dotted] (-1.1,-9.6) -- (-2.4,-10.9);
\draw (-2.55,-11.1) -- (-3.45,-11.9);

\draw (2,-10.6) -- (1.05,-11.4); 
\draw [dotted] (0.9,-11.6) -- (-0.4,-12.9);
\draw (-0.55,-13.1) -- (-1.45,-13.9);

\draw (3,-11.6) -- (2.05,-12.4); 
\draw [dotted] (1.9,-12.6) -- (0.6,-13.9);
\draw (0.45,-14.1) -- (-0.45,-14.9);

\draw (-2.6,-2.5) -- (-4.9,-3.5); 
\draw (-1.6,-3.5) -- (-3.9,-4.5); 
\draw (0.4,-5.5) -- (-1.9,-6.5); 
\draw (1.4,-6.5) -- (-0.9,-7.5); 
\draw (2.4,-7.5) -- (0.1,-8.5); 
\draw (4.4,-9.5) -- (2.1,-10.5); 
\draw (5.4,-10.5) -- (3.1,-11.5); 

\draw (-3.6,-3.5) -- (-5.9,-4.5); 
\draw (-2.6,-4.5) -- (-4.9,-5.5); 
\draw (-0.6,-6.5) -- (-2.9,-7.5); 
\draw (0.4,-7.5) -- (-1.9,-8.5); 
\draw (1.4,-8.5) -- (-0.9,-9.5); 
\draw (3.4,-10.5) -- (1.1,-11.5); 
\draw (4.4,-11.5) -- (2.1,-12.5); 

\draw (0,-0.1) -- (-0.2,-0.1); 
\draw (0,-0.1) -- (-0.05,-0.3); 

\draw (1,-1.1) -- (0.8,-1.1); 
\draw (1,-1.1) -- (0.95,-1.3); 
\draw (3,-3.1) -- (2.8,-3.1); 
\draw (3,-3.1) -- (2.95,-3.3); 
\draw (4,-4.1) -- (3.8,-4.1); 
\draw (4,-4.1) -- (3.95,-4.3); 
\draw (5,-5.1) -- (4.8,-5.1); 
\draw (5,-5.1) -- (4.95,-5.3); 
\draw (7,-7.1) -- (6.8,-7.1); 
\draw (7,-7.1) -- (6.95,-7.3); 
\draw (8,-8.1) -- (7.8,-8.1); 
\draw (8,-8.1) -- (7.95,-8.3); 

\draw (-2.5,-2.6) -- (-2.7,-2.6); 
\draw (-2.5,-2.6) -- (-2.55,-2.8); 
\draw (-1.5,-3.6) -- (-1.7,-3.6); 
\draw (-1.5,-3.6) -- (-1.55,-3.8); 
\draw (0.5,-5.6) -- (0.3,-5.6); 
\draw (0.5,-5.6) -- (0.45,-5.8); 
\draw (1.5,-6.6) -- (1.3,-6.6); 
\draw (1.5,-6.6) -- (1.45,-6.8); 
\draw (2.5,-7.6) -- (2.3,-7.6); 
\draw (2.5,-7.6) -- (2.45,-7.8); 
\draw (4.5,-9.6) -- (4.3,-9.6); 
\draw (4.5,-9.6) -- (4.45,-9.8); 
\draw (5.5,-10.6) -- (5.3,-10.6); 
\draw (5.5,-10.6) -- (5.45,-10.8); 

\draw (-5,-3.6) -- (-5.2,-3.6); 
\draw (-5,-3.6) -- (-5.05,-3.8); 
\draw (-4,-4.6) -- (-4.2,-4.6); 
\draw (-4,-4.6) -- (-4.05,-4.8); 
\draw (-2,-6.6) -- (-2.2,-6.6); 
\draw (-2,-6.6) -- (-2.05,-6.8); 
\draw (-1,-7.6) -- (-1.2,-7.6); 
\draw (-1,-7.6) -- (-1.05,-7.8); 
\draw (0,-8.6) -- (-0.2,-8.6); 
\draw (0,-8.6) -- (-0.05,-8.8); 
\draw (2,-10.6) -- (1.8,-10.6); 
\draw (2,-10.6) -- (1.95,-10.8); 
\draw (3,-11.6) -- (2.8,-11.6); 
\draw (3,-11.6) -- (2.95,-11.8); 

\draw (-7.5,-6.1) -- (-7.7,-6.1); 
\draw (-7.5,-6.1) -- (-7.55,-6.3); 
\draw (-6.5,-7.1) -- (-6.7,-7.1); 
\draw (-6.5,-7.1) -- (-6.55,-7.3); 
\draw (-4.5,-9.1) -- (-4.7,-9.1); 
\draw (-4.5,-9.1) -- (-4.55,-9.3); 
\draw (-3.5,-10.1) -- (-3.7,-10.1); 
\draw (-3.5,-10.1) -- (-3.55,-10.3); 
\draw (-2.5,-11.1) -- (-2.7,-11.1); 
\draw (-2.5,-11.1) -- (-2.55,-11.3); 
\draw (-0.5,-13.1) -- (-0.7,-13.1); 
\draw (-0.5,-13.1) -- (-0.55,-13.3); 
\draw (0.5,-14.1) -- (0.3,-14.1); 
\draw (0.5,-14.1) -- (0.45,-14.3); 

\draw (-2.6,-2.5) -- (-2.8,-2.5); 
\draw (-2.6,-2.5) -- (-2.75,-2.65); 
\draw (-1.6,-3.5) -- (-1.8,-3.5); 
\draw (-1.6,-3.5) -- (-1.75,-3.65); 
\draw (0.4,-5.5) -- (0.2,-5.5); 
\draw (0.4,-5.5) -- (0.25,-5.65); 
\draw (1.4,-6.5) -- (1.2,-6.5); 
\draw (1.4,-6.5) -- (1.25,-6.65); 
\draw (2.4,-7.5) -- (2.2,-7.5); 
\draw (2.4,-7.5) -- (2.25,-7.65); 
\draw (4.4,-9.5) -- (4.2,-9.5); 
\draw (4.4,-9.5) -- (4.25,-9.65); 
\draw (5.4,-10.5) -- (5.2,-10.5); 
\draw (5.4,-10.5) -- (5.25,-10.65); 

\draw (-3.6,-3.5) -- (-3.8,-3.5); 
\draw (-3.6,-3.5) -- (-3.75,-3.65); 
\draw (-2.6,-4.5) -- (-2.8,-4.5); 
\draw (-2.6,-4.5) -- (-2.75,-4.65); 
\draw (-0.6,-6.5) -- (-0.8,-6.5); 
\draw (-0.6,-6.5) -- (-0.75,-6.65); 
\draw (0.4,-7.5) -- (0.2,-7.5); 
\draw (0.4,-7.5) -- (0.25,-7.65); 
\draw (1.4,-8.5) -- (1.2,-8.5); 
\draw (1.4,-8.5) -- (1.25,-8.65); 
\draw (3.4,-10.5) -- (3.2,-10.5); 
\draw (3.4,-10.5) -- (3.25,-10.65); 
\draw (4.4,-11.5) -- (4.2,-11.5); 
\draw (4.4,-11.5) -- (4.25,-11.65); 

\draw (0,-0.1) -- (0.2,-0.1); 
\draw (0,-0.1) -- (0.05,-0.3); 
\draw (3,-3.1) -- (3.2,-3.1); 
\draw (3,-3.1) -- (3.05,-3.3); 
\draw (4,-4.1) -- (4.2,-4.1); 
\draw (4,-4.1) -- (4.05,-4.3); 
\draw (7,-7.1) -- (7.2,-7.1); 
\draw (7,-7.1) -- (7.05,-7.3); 

\draw (-1,-1.1) -- (-0.8,-1.1); 
\draw (-1,-1.1) -- (-0.95,-1.3); 
\draw (2,-4.1) -- (2.2,-4.1); 
\draw (2,-4.1) -- (2.05,-4.3); 
\draw (3,-5.1) -- (3.2,-5.1); 
\draw (3,-5.1) -- (3.05,-5.3); 
\draw (6,-8.1) -- (6.2,-8.1); 
\draw (6,-8.1) -- (6.05,-8.3); 

\draw (-2.5,-2.6) -- (-2.3,-2.6); 
\draw (-2.5,-2.6) -- (-2.45,-2.8); 
\draw (0.5,-5.6) -- (0.7,-5.6); 
\draw (0.5,-5.6) -- (0.55,-5.8); 
\draw (1.5,-6.6) -- (1.7,-6.6); 
\draw (1.5,-6.6) -- (1.55,-6.8); 
\draw (4.5,-9.6) -- (4.7,-9.6); 
\draw (4.5,-9.6) -- (4.55,-9.8); 

\draw (-3.5,-3.6) -- (-3.3,-3.6); 
\draw (-3.5,-3.6) -- (-3.45,-3.8); 
\draw (-0.5,-6.6) -- (-0.3,-6.6); 
\draw (-0.5,-6.6) -- (-0.45,-6.8); 
\draw (0.5,-7.6) -- (0.7,-7.6); 
\draw (0.5,-7.6) -- (0.55,-7.8); 
\draw (3.5,-10.6) -- (3.7,-10.6); 
\draw (3.5,-10.6) -- (3.55,-10.8); 

\draw (-5,-3.6) -- (-4.8,-3.6); 
\draw (-5,-3.6) -- (-4.95,-3.8); 
\draw (-2,-6.6) -- (-1.8,-6.6); 
\draw (-2,-6.6) -- (-1.95,-6.8); 
\draw (-1,-7.6) -- (-0.8,-7.6); 
\draw (-1,-7.6) -- (-0.95,-7.8); 
\draw (2,-10.6) -- (2.2,-10.6); 
\draw (2,-10.6) -- (2.05,-10.8); 

\draw (-6,-4.6) -- (-5.8,-4.6); 
\draw (-6,-4.6) -- (-5.95,-4.8); 
\draw (-3,-7.6) -- (-2.8,-7.6); 
\draw (-3,-7.6) -- (-2.95,-7.8); 
\draw (-2,-8.6) -- (-1.8,-8.6); 
\draw (-2,-8.6) -- (-1.95,-8.8); 
\draw (1,-11.6) -- (1.2,-11.6); 
\draw (1,-11.6) -- (1.05,-11.8); 

\draw (-7.5,-6.1) -- (-7.3,-6.1); 
\draw (-7.5,-6.1) -- (-7.45,-6.3); 
\draw (-4.5,-9.1) -- (-4.3,-9.1); 
\draw (-4.5,-9.1) -- (-4.45,-9.3); 
\draw (-3.5,-10.1) -- (-3.3,-10.1); 
\draw (-3.5,-10.1) -- (-3.45,-10.3); 
\draw (-0.5,-13.1) -- (-0.3,-13.1); 
\draw (-0.5,-13.1) -- (-0.45,-13.3); 

\draw (-8.5,-7.1) -- (-8.3,-7.1); 
\draw (-8.5,-7.1) -- (-8.45,-7.3); 
\draw (-5.5,-10.1) -- (-5.3,-10.1); 
\draw (-5.5,-10.1) -- (-5.45,-10.3); 
\draw (-4.5,-11.1) -- (-4.3,-11.1); 
\draw (-4.5,-11.1) -- (-4.45,-11.3); 
\draw (-1.5,-14.1) -- (-1.3,-14.1); 
\draw (-1.5,-14.1) -- (-1.45,-14.3); 

\end{tikzpicture} 
\caption{Diagram of the weights of the $\frak{k}$-module $\frak{p}$ for $\frak{g}_0 = \frak{so}(2p,2l-2p+1)$}\label{diagram}
\end{center}
\end{figure} 

In the Figure \ref{diagram}, the vertices represent the roots in $\Delta_n$. Two roots $\beta, \gamma \in \Delta_n$ are joined by an edge with an arrow in the direction of 
$\gamma$ if $\gamma = \beta + \phi$ for some $\phi \in \Phi_\frak{k}$. In this case, the simple root $\phi$ is given on one side of the edge. From this diagram, we can 
clearly see that the lowest weight of the $\frak{l}'_0$-submodule of $\frak{p}$ with highest weight $\lambda$ is $\lambda - 2(\phi_{p+1}+ \cdots + 2\phi_l) - (\phi_1 + 
\cdots +\phi_{p-2}) - \epsilon = -(\phi_p + 2\phi_{p+1} +\cdots +2\phi_l) = \nu'.$ Also $P' \cap \Delta_n = \{ \beta \in P \cap \Delta_n : \beta \ge \phi_{p-1}+\phi_p\} \cup 
\{ \beta \in -P \cap \Delta_n : \beta \ge \nu'\} = \{\beta \in \Delta_n : n_{\phi_{p-1}}(\beta) = 1 \}\cup \{\beta \in -P \cap \Delta_n : n_{\phi_{p-1}}(\beta)=0 \}.$ 

%
%
%
\noindent 
\section{Discrete series representations} 

We will continue with notations from the previous section. Also assume that $G$ be a connected simple Lie group with finite centre and the Lie 
algebra of $G$ is $\frak{g}_0,$ and $K$ be the connected Lie subgroup of $G$ with Lie algebra $\frak{k}_0.$ Since rank$(G)=$ rank$(K), G$ admits 
discrete series representations. A non-singular linear function $\gamma$ on $i\frak{t}_0$ relative to $\Delta$ defines uniquely a 
positive root system $P_\gamma$ of $\Delta.$ Define $\rho_\frak{g} = \frac{1}{2}\sum_{\alpha \in P_\gamma}\alpha, \rho_\frak{k} = \frac{1}{2} 
\sum_{\alpha \in P_\gamma \cap \Delta_\frak{k}}\alpha.$ If $\gamma +\rho_\frak{g}$ is 
analytically integral(that is, $\gamma +\rho_\frak{g}$ is the differential of a Lie group homomorphism on the Cartan subgroup of $G$ corresponding to $\frak{t}_0$), then there exists a 
discrete series representation $\pi_\gamma$ with infinitesimal character $\chi_\gamma;$ the associated $(\frak{g}, K)$-module $\pi_{\gamma,K}$ contains an 
irreducible $K$-submodule with highest weight $\Gamma=\gamma+\rho_\frak{g}-2\rho_\frak{k};$
it occurs with multiplicity one in $\pi_{\gamma, K}.$ Any other irreducible $K$-module that occurs in $\pi_{\gamma, K}$ has highest weight 
of the form $\Gamma + \sum_{\alpha \in P_\gamma} n_\alpha \alpha,$ with $n_\alpha$ a non-negative integer; and 
two such representations $\pi_\gamma,$ and $\pi_{\gamma'}$ are unitarily equivalent {\it iff} $\gamma = w\gamma'$ for some $w \in W_\frak{k}$ \cite[Th. 9.20, Ch. IX]{knapp}. 
This $\gamma$ is called the {\it Harish-Chandra parameter}, $\Gamma$ is called the {\it Blattner parameter} of the discrete series representation $\pi_\gamma,$ 
and the positive root system $P_\gamma$ is called the {\it Harish-Chandra root order} corresponding to $\gamma.$
Upto unitary equivalence, these are all discrete series representations of $G$ \cite[Th. 12.21, Ch. XII]{knapp}. The condition $\gamma = w\gamma'$ for some $w \in W_\frak{k}$ 
implies that $P_\gamma \cap \Delta_\frak{k} = w(P_{\gamma'} \cap \Delta_\frak{k}).$ Hence to get non-equivalent discrete series 
representations, we may assume that the Harish-Chandra root order $P_\gamma$ corresponding to $\gamma$ contains $P_\frak{k}$ 
so that $P_\gamma \cap \Delta_\frak{k} = P_\frak{k}.$ Here the infinitesimal character $\chi_\gamma$ is the character of the Verma module of 
$\frak{g}$ with highest weight $\gamma-\rho_\frak{g}.$ We have $\chi_\gamma = \chi_{\gamma'}$ {\it iff} $\gamma' = w\gamma$ for some $w \in W_\frak{g}$ \cite[Th. 5.62, Ch. V]{knappb}. 
Thus the set of all mutually unitary inequivalent discrete series representations with infinitesimal character $\chi_\gamma$ is 
$\{ \pi_{w\gamma} : w \in W_\frak{g}, P_\frak{k} \subset P_{w\gamma} \}.$ Hence the number of unitary equivalence classes of discrete series representations with 
infinitesimal character $\chi_\gamma$ is $|W_\frak{g}|/|W_\frak{k}|.$ 

Assume that $G/K$ is Hermitian symmetric that is, $\frak{k}_0$ has non-zero centre, 
$P$ be a Borel-de Siebenthal positive root system containing $P_\frak{k},$ and $\rho_\frak{g} = \frac{1}{2}\sum_{\alpha \in P}\alpha.$ 
Then the symmetric space $G/K$ can be identified 
as an open subset of $\frak{l}_1 = \sum_{\beta \in P \cap \Delta_n} \frak{g}^\beta,$ and the complex structure of $G/K$ comes from the complex structure of the 
complex vector space $\frak{l}_1.$ See \cite[Ch. VIII]{helgason}. Let $\Lambda$ be a linear function on $i\frak{t}_0$ such that $\Lambda$ is analytically integral, dominant relative to 
$P_\frak{k},$ and $\langle \Lambda +\rho_\frak{g} , \beta \rangle < 0,$ for all $\beta \in P \cap \Delta_n.$ 
Then there exists a discrete series representation $\pi_\Lambda$ whose Harish-Chandra parameter is $\Lambda +\rho_\frak{g},$ 
the Harish-Chandra root order is $P_\frak{k} \cup (-P \cap \Delta_n),$ and the Blattner parameter is $\Lambda.$ This representation $\pi_\Lambda$ is defined by Harish-Chandra and it is called 
holomorphic discrete series representation (\cite{repka}). This discrete series representation $\pi_\Lambda$ is called holomorphic because its representation space is the set of all 
holomorphic maps from $G/K$ to $V_\Lambda$ (the representation space of the finite-dimensional irreducible unitary representation of $K$ with highest weight $\Lambda$) with finite 
$L^2$-norm (relative to the left $G$-invariant measure on $G/K$). Similarly we can define an anti-holomorphic discrete series representation $\pi_{\Lambda'},$ where $\Lambda' \in 
(i\frak{t}_0)^*$ is analytically integral, dominant relative to $-P_\frak{k},$ and $\langle \Lambda' - \rho_\frak{g} , \beta \rangle < 0,$ for all $\beta \in -P \cap \Delta_n$ (\cite{repka}). If the 
symmetric space $G/K$ is identified as an open subset of $\frak{l}_{-1} = \sum_{\beta \in -P \cap \Delta_n} \frak{g}^\beta$ so that $G/K$ is equipped with the conjugate complex structure, 
then the representation space of $\pi_{\Lambda'}$ is the set of all 
holomorphic maps from $G/K$ to $V_{\Lambda'}$ (the representation space of the finite-dimensional irreducible unitary representation of $K$ with lowest weight $\Lambda$ relative to $P_\frak{k}$) 
with finite $L^2$-norm. Note that $\pi_{\Lambda'}$ is unitarily equivalent to $\pi_{w_\frak{k}^0(\Lambda')}$ ($w_\frak{k}^0 \in W_\frak{k}$ is the longest element relative to $P_\frak{k}$), and the 
Harish-Chandra root order of $\pi_{w_\frak{k}^0(\Lambda')}$ is $P.$ Now the Borel-de Siebenthal positive root system containing $P_\frak{k}$ are $P_\frak{k} \cup (-P \cap \Delta_n),$ and $P.$ 
The corresponding discrete series representations with infinitesimal character $\chi_{\Lambda +\rho_\frak{g}}$ are $\pi_\Lambda,$ which is holomorphic, and $\pi_{w^0(\Lambda)}$ ($w^0 = w_\frak{k}^0
w_\frak{g}^0, w_\frak{g}^0 \in W_\frak{g}$ is the longest element relative to $P$), which is anti-holomorphic, respectively. 

Now assume that $G/K$ is not Hermitian symmetric that is, $\frak{k}_0$ is semisimple, $P$ be a Borel-de Siebenthal positive root system containing $P_\frak{k},$ and 
$\rho_\frak{g} = \frac{1}{2}\sum_{\alpha \in P}\alpha.$ Then \O rsted and Wolf \cite{ow} have defined Borel-de Siebenthal discrete series representations of $G$ 
analogous to holomorphic discrete series representations as follows: Let $\gamma \in (it_0)^*$ be such that $\gamma$ is analytically integral, dominant relative to the positive root system 
$P_0 = P \cap \Delta_0$ of $\Delta_0 = \Delta(\frak{l}_0,\frak{h}),$ and $\langle \gamma +\rho_\frak{g} , \beta \rangle < 0,$ for all $\beta \in P \setminus P_0.$ 
The Borel-de Siebenthal discrete series representation $\pi_{\gamma + \rho_\frak{g}}$ of $G$, is a discrete series representation for which the Harish-Chandra parameter is $\gamma +\rho_\frak{g}.$ 
Clearly the Harish-Chandra root order is $P_0 \cup -(P \setminus P_0),$ and the Blattner parameter is $\gamma +\sum_{\beta \in P, n_\nu(\beta)=2} \beta.$

\noindent
{\bf Proof of Theorem \ref{rep}:} 
If $G/K$ is not Hermitian symmetric, consider the Borel-de Siebenthal discrete series representation $\pi_{\gamma + \rho_\frak{g}}$ of $G.$ The Harish-Chandra root order 
of $\pi_{\gamma + \rho_\frak{g}}$ is $P_0 \cup -(P \setminus P_0),$ which is a Borel-de Siebenthal positive root system: \\ 
For $-(P \setminus P_\frak{k}) = \{-\lambda + \sum_{\phi \in \Phi_0} n_\phi \phi : n_\phi \in \mathbb{N} \cup \{0\} \},$ 
$-(P_\frak{k} \setminus P_0) = \{-\mu + \sum_{\phi \in \Phi_0} n_\phi \phi : n_\phi \in \mathbb{N} \cup \{0\} \}$ and $-\mu = -2\lambda + \sum_{\phi \in \Phi_0} m_\phi \phi$ 
for some $m_\phi \in \mathbb{N} \cup \{0\} (\phi \in \Phi).$ Thus the set of simple roots of $P_0 \cup -(P \setminus P_0)$ is $\Phi_0 \cup \{-\lambda\},$ which has exactly 
one non-compact root $-\lambda,$ and the coefficient of $-\lambda$ in the highest root $-\epsilon$ of $P_0 \cup -(P \setminus P_0),$ when expressed as the sum of 
roots in $\Phi_0 \cup \{-\lambda\},$ is $2.$ \\ 
Since $w'_0=w_{\frak{l}_0}^0w_\frak{k}^0 \in W_\frak{k}$ (recall that $w_{\frak{l}_0}^0 \in W_{\frak{l}_0}$ is the 
longest element relative to $P_0$), we have $\pi_{\gamma + \rho_\frak{g}}$ is unitarily equivalent to $\pi_{w'_0(\gamma + \rho_\frak{g})}.$ The Harish-Chandra root order 
of $\pi_{w'_0(\gamma + \rho_\frak{g})}$ is the Borel-de Siebenthal positive root system $w'_0(P_0 \cup -(P \setminus P_0))$ containing $P_\frak{k}.$ 
Thus the set of all mutually unitary inequivalent Borel-de Siebenthal discrete series representations with infinitesimal character $\chi_{\gamma + \rho_\frak{g}}$ is 
$\{ \pi_{ww'_0(\gamma +\rho_\frak{g})} : w \in W_\frak{g}, ww'_0(P_0 \cup -(P \setminus P_0)) \textrm{ is a Borel-de Siebenthal positive root system containing } P_\frak{k} \}.$ 
Hence the number of unitary equivalence classes of Borel-de Siebenthal discrete series representations with 
infinitesimal character $\chi_{\gamma+\rho_\frak{g}}$ is the number of Borel-de Siebenthal positive root systems of $\Delta$ containing $P_\frak{k},$ which is given 
in the statement of Theorem \ref{rep}.

\section*{acknowledgement}

The author acknowledges the financial support from the Department of Science and Technology (DST), Govt. of India under the Scheme 
"Fund for Improvement of S\&T Infrastructure (FIST)" [File No. SR/FST/MS-I/2019/41].

\end{document}